\documentclass[12pt,reqno]{amsart}

\usepackage{amsmath,amsthm,amssymb}
\usepackage{amssymb}
\usepackage{amsmath}
\usepackage[all]{xy}
\usepackage{tikz-cd}
\usepackage{tikz}

\newtheorem{theorem}{Theorem}[section]

\newtheorem{lemma}[theorem]{Lemma}
\newtheorem{corollary}[theorem]{Corollary}

\numberwithin{equation}{section}

\theoremstyle{definition}
\newtheorem{definition}[theorem]{Definition}

\usepackage[all]{xy}

\newcommand{\R}{{\mathbb R}}
\newcommand{\C}{{\mathbb C}}

\newcommand{\Z}{{\mathbb Z}}

\def\dim{{\rm dim}}

\def\Pic{{\rm Pic}}
\def\mod{{\rm mod}}
\def\Aut{{\rm Aut}}

\def\Alb{{\rm Alb}}

\def\Amp{{\rm Amp}}
\def\Nef{{\rm Nef}}

\begin{document}

\title[Torelli theorem for the product moduli spaces]{Torelli theorem for 
the product of moduli spaces of vector bundles over a curve}

\author[I. Biswas]{Indranil Biswas}

\address{Department of Mathematics, Shiv Nadar University, NH91, Tehsil Dadri Greater Noida, Uttar Pradesh 
201314, India}

\email{indranil29@gmail.com, indranil.biswas@snu.edu.in}

\author[A. Dey]{Arijit Dey}
\address{Department of Mathematics, IIT Madras, Chennai, India}
\email{arijitdey@gmail.com}

\author[A. Pahari]{Anubhab Pahari}
\address{Department of Mathematics, IIT Madras, Chennai, India}
\email{anubhabpahari@gmail.com, ma22d012@smail.iitm.ac.in}

\begin{abstract}
We prove a Torelli-type theorem for a product of the moduli spaces of semistable vector bundles
over smooth projective curves of genus $g\,\ge\, 4$. A similar result is proved
for the product of the moduli stacks of semistable vector bundles.
This is proved using a decomposition theorem for the product of the normal projective 
varieties with Picard rank one and discrete Picard group. As an application, we compute the automorphism group of the product of the moduli spaces and moduli stacks.
\end{abstract}

\keywords{Moduli space, Torelli theorem, nef cone, curve.}

\subjclass[2020]{14C20, 14C34, 14D20, 14D23}

\maketitle

\section{Introduction}

The classical Torelli theorem states that two smooth complex projective curves $X$ and $X'$ are isomorphic if 
their Jacobians $J(X)$ and $J(X')$ are isomorphic as polarized abelian varieties with polarization given by a 
theta divisor. Thus, the geometry of a curve is completely encoded in its 
principally polarized Jacobian. This was generalized to the moduli spaces of vector bundles on curves.

Given a smooth complex projective curve $X$ and a line bundle $L$ on it, let
$M_X(r,L)$ denote the moduli space of semistable vector
bundles of rank $r$ on $X$ with fixed determinant $L$.
Mumford and Newstead \cite{Mumford} and Tyurin \cite{Tyurin1} proved that the isomorphism class of 
$M_X(2,L)$, with ${\rm degree}{L}$ being an odd integer, uniquely determines the isomorphism class of $X$,
in other words, $X\,\cong\, X'$ if $M_X(2,L)\,\cong\, M_{X'}(2,L')$. Subsequently, Tyurin \cite{Tyurin2} and 
Narasimhan and Ramanan \cite{Narasimhan1} proved for the general case; in other words, when \\
($r$ , ${\rm degree}({L})$ ) = 1= ($r$ , ${\rm degree}({L'})$ ), and $M_X(r,L)\,\cong\, M_{X'}(r,L')$, then $X\,\cong\, X'$.

Later, Kouvidakis and Pantev \cite{Kouvidakis} proved the most general
case for moduli spaces of semistable vector bundles over curves of genus greater 
than $2$; they did not assume the coprimality condition on rank and degree. In addition, they also 
computed the automorphism group of $M_X(r,L)$. Using different geometric techniques, Hwang, Ramanan 
\cite{Hwang} and Biswas, G\'omez and M\~unoz \cite{Biswas} proved the Torelli theorem and also
computed the automorphism group of $M_X(r,L)$ for curve $X$ of genus $g\,\ge\, 4$. More recently, Alfaya, Biswas, G\'omez 
and Mukhopadhyay \cite{Alfaya2}  proved the Torelli theorem for Moduli stacks of principal $G$ bundles on a curve 
of genus at least $3$, where $G$ is any complex reductive algebraic group.

The aim here is to prove a Torelli-type theorem for the product of the moduli spaces of semistable 
vector bundles. More precisely, we prove the following (see Theorem \ref{thm:torelli for prod of moduli space}).

\begin{theorem}\label{thm-a}
For $1\, \leq\, i\, \leq\, m$ (respectively, $1\, \leq\, j\, \leq\, n$) let
$X_i$ (respectively, $X_j'$) be an irreducible smooth projective curves with genus at least $4$.
Fix an integer  $r_i\, \geq\, 2$ (respectively, $r_j' \,\ge\, 2$). Let $M_{X_i}(r_i,d_i)$
(respectively $M_{X_j'}(r'_j, d_j')$) be the moduli space of semistable vector bundles over $X_i$
(respectively, $X_j'$) of rank $r_i$ (respectively, $r_j'$) and degree $d_i$ (respectively, $d_j'$).
Then,
\begin{equation*}
\prod_{i=1}^m M_{X_i}(r_i,d_i)\ \cong\ \prod_{j=1}^n M_{X_j'}(r_j',d_j')
\end{equation*}
if and only if the followings four statements hold:
\begin{enumerate}
\item $m\,=\, n$;

\item There is a permutation $\sigma$ of $\{1,\, \cdots,\, m\}$ such that
$$X_i\ \cong\ X'_{\sigma(i)}$$
for all $i\,\in\, \{1,\, \cdots,\, m\}$;

\item $r_i\ =\ r_{\sigma(i)}'$;

\item $d_i\ \equiv\ \pm d'_{\sigma(i)}$\ \ $(\mod\ \; r_i)$
\end{enumerate}
\end{theorem}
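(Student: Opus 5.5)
The plan is to reduce the statement to the fixed-determinant moduli spaces and the Jacobians, using the determinant map. For each $i$ the determinant morphism $\det\colon M_{X_i}(r_i,d_i)\to \Pic^{d_i}(X_i)\cong J(X_i)$ is isotrivial with fibre $M_{X_i}(r_i,L_i)$. Moreover, there is an étale cover $M_{X_i}(r_i,L_i)\times J(X_i)\to M_{X_i}(r_i,d_i)$, $(E,\xi)\mapsto E\otimes\xi$, with Galois group $J(X_i)[r_i]$.

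\emph{The ``if'' direction.} This is routine. Tensoring with a line bundle of degree $k$ gives $M_X(r,d)\cong M_X(r,d+kr)$, and $E\mapsto E^*$ gives $M_X(r,d)\cong M_X(r,-d)$. Composing these with the isomorphisms $X_i\cong X'_{\sigma(i)}$ and permuting the factors yields the required isomorphism of products.

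\emph{The ``only if'' direction} proceeds in five steps.
\begin{enumerate}
\item Let $P=\prod_i M_{X_i}(r_i,d_i)$. Since the fibres of each determinant map are unirational (rationally connected), the Albanese of $P$ is $\prod_i J(X_i)$, and the Albanese map is $\prod_i\det$. An isomorphism $\Phi\colon P\to P'=\prod_j M_{X'_j}(r'_j,d'_j)$ therefore induces an isomorphism of Albanese varieties. Since the Albanese map is intrinsic, $\Phi$ also carries fibres to fibres.
\item Restricting to a fibre, we get $\prod_i M_{X_i}(r_i,L_i)\cong \prod_j M_{X'_j}(r'_j,L'_j)$. Each factor is a normal projective variety with Picard group $\Z$ (Drezet--Narasimhan), so the decomposition theorem for products of normal projective varieties with Picard rank one and discrete Picard group stated earlier in the paper applies. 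It gives $m=n$, a permutation $\sigma$, and isomorphisms $M_{X_i}(r_i,L_i)\cong M_{X'_{\sigma(i)}}(r'_{\sigma(i)},L'_{\sigma(i)})$.
\item The Torelli theorem for fixed-determinant moduli spaces in genus $g\ge4$ (Kouvidakis--Pantev, Hwang--Ramanan, Biswas--G\'omez--Mu\~noz) then gives $X_i\cong X'_{\sigma(i)}$ and $r_i=r'_{\sigma(i)}$. This can be read off from the dimension $(r^2-1)(g-1)$ together with, say, the Hitchin discriminant or the Hecke-curve construction. Here I will extract the rank from the computed automorphism/Torelli data, for instance via the genus read from $\Alb$ (note $\dim J(X_i)=g_i$) combined with the dimension.
\item It remains to show $d_i\equiv\pm d'_{\sigma(i)}\pmod{r_i}$. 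This is the main obstacle. The isomorphism class of $M_X(r,L)$ alone does not always detect $d$ modulo $r$ beyond $\pm$, so I would use the global structure. The monodromy of the fibration $P\to\Alb(P)$ is given by the action of $\prod_i J(X_i)[r_i]$ on the fibre, and the twisting of the fibration encodes $d_i \bmod r_i$. Concretely, I would compute the Picard group of $M_X(r,d)$, namely $\Pic(J)\oplus\Z\Theta$, and the restriction of generators to the fibre. The theta bundle on $M_X(r,L)$ descends to $M_X(r,d)$ only up to the factor $r/\gcd(r,d)$, and the way $\Pic$ of the total space restricts to the fibre detects $\gcd(r,d)$. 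The finer invariant $\pm d \bmod r$ should come from the action of $J[r]$ on the fibre: on $H^*$ or on the Brauer class/Weil pairing data, the class of the descended projective bundle is $d \bmod r$ up to automorphisms of $J$ and the dualizing involution.
\item Alternatively, I would reduce this step factorwise. Using the decomposition (applied to the étale cover $\prod_i M_{X_i}(r_i,L_i)\times\prod_i J(X_i)$ with its Galois group), I would show that $\Phi$ respects the product decomposition up to $\sigma$, obtaining $M_{X_i}(r_i,d_i)\cong M_{X'_{\sigma(i)}}(r_i,d'_{\sigma(i)})$. The known Torelli result for the full (non-fixed-determinant) moduli spaces, which includes the condition $d\equiv\pm d'$, then finishes the proof.
\end{enumerate}

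I expect this last factorwise splitting, that is, showing that the isomorphism of total spaces (and not just of fibres) is compatible with the product structure, to be the hardest part. I would attack it first by lifting $\Phi$ to the universal covers of the Albanese fibrations, and by using that every automorphism of $\prod_i M_{X_i}(r_i,L_i)$ permutes the factors.
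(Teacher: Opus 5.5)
Your Steps 1--3 follow the paper's route exactly. The determinant map is the Albanese morphism, so $\Phi$ preserves the fibres of $\prod_i\det$. The restricted isomorphism $\prod_i M_{X_i}(r_i,L_i)\cong\prod_j M_{X'_j}(r'_j,L'_j)$ falls under Theorem~\ref{thm:classification_fano}, since $\Pic=\Z$ gives $\rho=1$ and $\Pic^0=0$. The fixed-determinant Torelli theorem then identifies the curves and ranks.

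\textbf{The gap is condition (4): you never prove it.} Step 4 only lists candidate invariants (the $J[r]$-monodromy, descent of $\Theta$, a Brauer class) with no argument attached. Step 5 rests on the claim that $\Phi$ respects the product decomposition of the \emph{full} moduli spaces up to $\sigma$, and this is false in general. Fix $L_0\in\Pic^d(X)$ and consider
\[
M_X(r,d)\times M_X(r,d)\longrightarrow M_X(r,d)\times M_X(r,d),\qquad (E,F)\longmapsto \bigl(E,\ F\otimes \det(E)\otimes L_0^{-1}\bigr).
\]
This is an automorphism; its inverse twists by the dual line bundle. Its second component genuinely depends on $E$, so it is not a product of maps of the factors composed with a permutation. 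The rigidity you quote for $\prod_i M_{X_i}(r_i,L_i)$ therefore does not lift to the total spaces, because of such twists by the Albanese coordinates.

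None of this is needed, and your stated reason for needing it is a misreading. The fixed-determinant Torelli theorem you already invoke in Step 3 contains the congruence. In the form the paper uses (Theorem~\ref{thm: torelli for moduli}), $M_{X_1}(r_1,L_1)\cong M_{X_2}(r_2,L_2)$ forces $\deg L_1\equiv\pm\deg L_2\pmod r$. Equivalently, every such isomorphism comes from an isomorphism of curves, tensoring by a line bundle and possibly dualizing. Detecting $d$ modulo $r$ only up to sign is exactly what (4) asks for.

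Now $L_i\in\Pic^{d_i}(X_i)$, and $L'_{\sigma(i)}$ lies in $\Pic^{d'_{\sigma(i)}}(X'_{\sigma(i)})$ because it is a coordinate of $\phi(L_1,\dots,L_m)$. So the isomorphisms $M_{X_i}(r_i,L_i)\cong M_{X'_{\sigma(i)}}(r'_{\sigma(i)},L'_{\sigma(i)})$ from your Step 2 give $d_i\equiv\pm d'_{\sigma(i)}\pmod{r_i}$ at once. This is how the paper concludes.

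Your detour through $\dim\Alb$ and the dimension to extract the rank is also unnecessary. $\dim\Alb(P)=\sum_i g_i$ does not even give the individual genera, and Torelli yields $X_i\cong X'_{\sigma(i)}$ and $r_i=r'_{\sigma(i)}$ directly.
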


A similar result is proved for the moduli stacks of semistable vector bundles $\mathcal{M}_X(r,L)$
(see Theorem \ref{thm:torelli for prod of stacks}).

\begin{theorem}\label{thm-b}
For $1\, \leq\, i\, \leq\, m$ (respectively, $1\, \leq\, j\, \leq\, n$) let
$X_i$ (respectively, $X_j'$) be an irreducible smooth projective curve whose genus is at least $4$.
Fix an integer  $r_i\, \geq\, 2$ (respectively, $r_j' \,\ge\, 2$). Let ${\mathcal M}_{X_i}(r_i,L_i)$
(respectively ${\mathcal M}_{X_j'}(r'_j, L_j')$) be the moduli stack of semistable vector bundles over $X_i$
(respectively, $X_j'$) of rank $r_i$ (respectively, $r_j'$) and fixed determinant $L_i$ (respectively, $L_j'$).
Then
\begin{equation}
\prod_{i=1}^m {\mathcal M}_{X_i}(r_i,L_i)\ \cong\ \prod_{j=1}^n {\mathcal M}_{X_j'}(r_j',L_j') 
\end{equation}
if and only if the following four statements hold:
\begin{enumerate}
\item $m\,=\, n$;

\item There is a permutation $\sigma$ of $\{1,\, \cdots,\, m\}$ such that
$$X_i\ \cong\ X'_{\sigma(i)}$$
for all $i\,\in\, \{1,\, \cdots,\, m\}$;

\item $r_i\ =\ r_{\sigma(i)}'$;

\item ${\rm degree}(L_i)\ \equiv\ \pm {\rm degree}(L'_{\sigma(i)})$\ \ $(\mod\ \; r_i)$
\end{enumerate}
\end{theorem}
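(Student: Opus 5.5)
We prove Theorem \ref{thm-b} by reducing it to the fixed-determinant Torelli theorem for a single moduli stack, proved in \cite{Alfaya2}. The reduction uses a decomposition theorem for products of normal projective varieties of Picard rank one with discrete Picard group, which we apply to the good moduli spaces. For each $i$ write $\mathcal{M}_i := \mathcal{M}_{X_i}(r_i,L_i)$, and let $M_i := M_{X_i}(r_i,L_i)$ be its good moduli space. Since $g(X_i)\ge 4$, the space $M_i$ is a normal, unirational projective variety. It has $\Pic(M_i)\cong\Z$, and this group is discrete because $M_i$ is unirational, so $H^1(M_i,\mathcal{O})=0$. Similarly, $\Pic(\mathcal{M}_i)\cong \Z$ is generated by the determinant-of-cohomology line bundle. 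Write $\mathcal{M}'_j$ and $M'_j$ for the corresponding objects on the other side.

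\textbf{Step 1: from stacks to moduli spaces.} An isomorphism of stacks $\Phi\colon \prod_i \mathcal{M}_i \to \prod_j \mathcal{M}'_j$ induces an isomorphism of good moduli spaces $\prod_i M_i \cong \prod_j M'_j$. Here we use two facts: good moduli spaces are unique, and the good moduli space of a finite product of such stacks is the product of the good moduli spaces, since these are GIT quotients and products of quotients of this kind are quotients again.

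\textbf{Step 2: decomposition.} Apply the decomposition theorem to $\prod_i M_i \cong \prod_j M'_j$. The idea behind it is that $\Pic(\prod_i M_i)=\bigoplus_i \Z$, because each $M_i$ has $H^1(\mathcal{O})=0$ and discrete Picard group. The nef cone is then the simplicial cone spanned by the pullbacks of the ample generators. An isomorphism must carry extremal rays to extremal rays. The extremal ray through the pullback of the ample generator of $M_i$ gives the contraction onto the corresponding factor, and each factor has dimension greater than zero. From this we get $m=n$, a permutation $\sigma$, and isomorphisms $M_i\cong M'_{\sigma(i)}$, compatible with $\Phi$ after composing with the projections.

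\textbf{Step 3: individual factors.} For each $i$, the isomorphism $M_i \cong M'_{\sigma(i)}$ and the Torelli theorem for moduli spaces \cite{Kouvidakis}, \cite{Hwang}, \cite{Biswas} give $X_i\cong X'_{\sigma(i)}$ and $r_i=r'_{\sigma(i)}$. The rank is also forced by the dimension count $(r^2-1)(g-1)$ together with $g(X_i)=g(X'_{\sigma(i)})$.

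To recover the degree condition, and to obtain the isomorphism at the level of stacks, we use the descended isomorphism of factors. An automorphism of $M_{X}(r,L)$ composed with an isomorphism to $M_X(r,L')$ is induced by a map $E\mapsto \tau^*E\otimes \ell$ or $E\mapsto \tau^*E^\vee\otimes \ell$, where $\tau\in\Aut(X)$ and $\ell$ is a line bundle. Tensoring with $\ell$ changes the degree by a multiple of $r$, and dualizing changes its sign, so $\deg L_i\equiv \pm\deg L'_{\sigma(i)} \pmod{r_i}$. The \emph{main obstacle} is that the degree is not determined by $M_i$ alone when some factors are isomorphic to one another. The cleanest route is to argue directly on the stack using the stacky Torelli theorem of \cite{Alfaya2}, which works for genus at least $3$ and yields exactly the list $\pm\deg$ modulo $r$. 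To do this we need the projection-compatibility from Step 2 for $\Phi$ itself. We obtain it by noting that $\Pic$ of the product stack is $\bigoplus\Z$, so $\Phi^*$ permutes the generators up to the same extremal-ray argument. Fixing a point in every other factor then restricts $\Phi$ to an isomorphism $\mathcal{M}_i\cong\mathcal{M}'_{\sigma(i)}$.

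\textbf{Converse.} Conversely, suppose conditions (1)–(4) hold. For each $i$, choose $\tau_i\colon X_i \to X'_{\sigma(i)}$ and a line bundle $\ell_i$ with $\tau_i^*L'_{\sigma(i)}\cong L_i^{\pm1}\otimes\ell_i^{r_i}$. The map $E\mapsto \tau_i^{*}E^{(\vee)}\otimes \ell_i^{-1}$, with dualization taken according to the sign $\pm$, preserves semistability and so defines an isomorphism of stacks $\mathcal{M}'_{\sigma(i)}\cong\mathcal{M}_i$. Taking the product of these maps over $i$ and reindexing by $\sigma$ gives the required isomorphism.
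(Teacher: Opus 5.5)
Your core argument is the paper's. The stack isomorphism descends to an isomorphism of good moduli spaces $\prod_i M_{X_i}(r_i,L_i)\cong\prod_j M_{X'_j}(r'_j,L'_j)$. Theorem~\ref{thm:classification_fano} then gives $m=n$ and $M_{X_i}(r_i,L_i)\cong M_{X'_{\sigma(i)}}(r'_{\sigma(i)},L'_{\sigma(i)})$, and Theorem~\ref{thm: torelli for moduli}, applied to each factor, gives conditions (2)--(4). You invoke Theorems~\ref{thm:classification_fano} and~\ref{thm: torelli for moduli} directly on the fixed-determinant spaces. That is slightly cleaner than the paper's literal citation of Theorem~\ref{thm:torelli for prod of moduli space}, which is stated for $M_X(r,d)$ without fixed determinant. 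For the converse, the paper just cites \cite[Theorem 3.6]{Alfaya2} factor by factor, and your explicit maps do the same job. One correction: in the dual case you must twist by $\ell_i$, not $\ell_i^{-1}$. If $\tau_i^*L'_{\sigma(i)}\cong L_i^{-1}\otimes\ell_i^{r_i}$, then $\det(\tau_i^*E^\vee\otimes\ell_i)\cong L_i$.

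The ``main obstacle'' in Step 3 does not exist, and the stack-level detour you propose to get around it is unnecessary and, as written, unjustified. Theorem~\ref{thm: torelli for moduli} says that the isomorphism class of the single variety $M_X(r,L)$ already determines $\deg L$ modulo $r$ up to sign. The same follows from the description of isomorphisms $E\mapsto\tau^*E\otimes\ell$, $E\mapsto\tau^*E^\vee\otimes\ell$ that you quote yourself. So once $M_{X_i}(r_i,L_i)\cong M_{X'_{\sigma(i)}}(r'_{\sigma(i)},L'_{\sigma(i)})$, condition (4) holds for that $\sigma$, whether or not several factors are isomorphic to each other.

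The detour has a real gap. A Picard-group or extremal-ray argument only tells you where $\Phi^*$ sends line bundles. Any map to projective space built from their sections factors through the good moduli space, because every morphism from the stack to a scheme does (\cite[Theorem 6.6]{Alper}). Such an argument therefore yields compatibility with the projections only for the induced isomorphism of good moduli spaces, not for $\Phi$ itself. Nothing in your sketch shows that $\Phi$ followed by projection onto $\mathcal{M}_{X'_{\sigma(i)}}(r'_{\sigma(i)},L'_{\sigma(i)})$ factors through the projection onto $\mathcal{M}_{X_i}(r_i,L_i)$. Without that, restricting $\Phi$ to a slice need not give an isomorphism of stacks. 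The forward direction is already complete at the level of moduli spaces, so delete this part.
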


The proof of Theorem \ref{thm-a} proceeds as follows. The determinant map 
$\det\,:\,M_X(r,d)\,\longrightarrow\, \Pic^d(X)$ defined by $E\, \longmapsto\, \bigwedge^rE$
is the \textit{Albanese morphism} (see Lemma \ref{lem: Albanese}). 
Therefore, any isomorphism
$$
\prod_{i=1}^m M_{X_i}(r_i,d_i)\ \cong\ \prod_{j=1}^n M_{X_j'}(r_j',d_j')
$$
induces an isomorphism between the Albanese varieties. This produces a commutative diagram (cf. Diagram 
\eqref{diag: Albanese}) relating determinant morphisms on both sides. Restricting to fibers of
the determinant map, we get an 
isomorphism between the product of moduli spaces with fixed determinants (see \eqref{eq: Isom of 
prod of fixed det moduli}), which is an isomorphism between the product of normal projective varieties with 
Picard rank 1 and discrete Picard varieties. Thus, we are naturally led to a general structural statement 
about products of such varieties. This is established in Theorem \ref{thm:classification_fano} in Section 2.

\begin{theorem}
Let $X_1,\, \cdots,\, X_m$ and $Y_1,\, \cdots,\, Y_n$ be normal projective varieties, with
$\rho(X_i)\, =\, 1\,=\, \rho(Y_j)$ and trivial $\Pic^0(X_i)\,=\, 0\,=\, \Pic^0(Y_j)$ for all
$1\, \leq\, i\, \leq\, m$ and $1\, \leq\, j\, \leq\, n$. If there exists an isomorphism
$$ X\ :=\ \prod_{i=1}^m X_i\ \cong\ \prod_{j=1}^n Y_j\ =:\ Y,$$
then $m\,=\,n$ and there is a permutation $\sigma$ of $\{1,\, \cdots, \, m\}$ such that
$X_i \,\cong\, Y_{\sigma(i)}$ for all $1\, \leq\, i\, \leq\, m$.
\end{theorem}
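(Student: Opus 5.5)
The plan is to compute the Picard group of the product and read the factors off from its nef cone. First I show that $\Pic(X)\,=\,\bigoplus_{i=1}^m p_i^*\Pic(X_i)$, where $p_i\,:\,X\,\longrightarrow\, X_i$ are the projections. For two normal projective varieties $A,\,B$ with $\Pic^0(A)\,=\,0$ one has $\Pic(A\times B)\,=\,\Pic(A)\oplus\Pic(B)$. Indeed, a line bundle $\mathcal L$ on $A\times B$ gives a family $\{\mathcal L|_{A\times\{b\}}\}_{b\in B}$ of line bundles on $A$. Since $\Pic^0(A)\,=\,0$ and $\Pic$ of a normal projective variety has discrete N\'eron--Severi part, this family is constant, so the seesaw principle gives $\mathcal L\,\cong\, p_A^*\mathcal L_A\otimes p_B^*\mathcal M$. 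For normal (non-smooth) varieties I would state this with reference to the Picard scheme being locally of finite type and $\Pic^0$ trivial. Inducting on $m$ then yields the decomposition above. Since $\rho(X_i)\,=\,1$, this gives $N^1(X)_\R\,=\,\R^m$ with basis $[p_i^*H_i]$, where $H_i$ is an ample generator. In particular $\rho(X)\,=\,m$, and likewise $\rho(Y)\,=\,n$, so $m\,=\,n$ because $\rho$ is an isomorphism invariant.

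Next I identify the nef cone. I claim $\Nef(X)$ is the simplicial cone spanned by the classes $[p_i^*H_i]$. These classes are nef, and $\sum a_i p_i^*H_i$ with all $a_i>0$ is ample. Conversely, take $D\,=\,\sum a_ip_i^*H_i$ with some $a_j<0$. Choose a curve $C\,\subset\, X_j$ and embed it into $X$ as $C\times\{pt\}$, with the other coordinates fixed. Then $D\cdot C\,=\,a_j\,(H_j\cdot C)\,<\,0$, so $D$ is not nef. Hence the extremal rays of $\Nef(X)$ are exactly $\R_{\ge0}[p_i^*H_i]$. The same holds for $Y$ with rays $\R_{\ge0}[q_j^*H'_j]$.

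Now let $f\,:\,X\,\longrightarrow\, Y$ be the isomorphism. The pullback $f^*$ maps $\Nef(Y)$ onto $\Nef(X)$, so it permutes the extremal rays: there is a permutation $\sigma$ with $f^*q_{\sigma(i)}^*H'_{\sigma(i)}$ lying on the ray of $p_i^*H_i$. Since $f^*$ is an isomorphism of Picard groups and both classes are primitive generators, $f^*q_{\sigma(i)}^*H'_{\sigma(i)}\,\cong\, p_i^*H_i^{\otimes k}$ with $k\,=\,1$ (or I simply allow a positive multiple). The main step is then to recover $p_i$ as the contraction associated to this face. I take $k\gg0$ so that the relevant bundles are very ample on the factor. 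For $Z\,=\,\prod Z_i$, the section ring of $p_i^*H_i$ is $\bigoplus_k H^0(Z_i,H_i^k)\otimes\bigotimes_{l\ne i}H^0(Z_l,\mathcal O)$ by K\"unneth. Normality and projectivity give $H^0(\mathcal O)\,=\,\C$, so $\mathrm{Proj}$ of this ring is $Z_i$ and the induced morphism is $p_i$. The isomorphism $f$ identifies the section rings $R(Y,q_{\sigma(i)}^*H'_{\sigma(i)})$ and $R(X,p_i^*H_i)$, hence it induces an isomorphism $Y_{\sigma(i)}\,\cong\, X_i$ compatible with the projections.

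I expect the main obstacle to be the first step, $\Pic(\prod X_i)\,=\,\bigoplus \Pic(X_i)$ for normal, possibly singular, varieties. This is where the hypothesis $\Pic^0\,=\,0$ is used, and the seesaw argument has to be checked carefully there, for instance via the rigidity lemma on the normalization of the base or via representability of the relative Picard functor. The remaining steps are formal once this decomposition is in hand.
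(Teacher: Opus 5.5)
Your proposal is correct and follows essentially the paper's argument: the N\'eron--Severi decomposition gives $m=n$, and the nef cone is simplicial with extremal rays the pulled-back factor classes. The isomorphism permutes these rays, and $X_i\cong Y_{\sigma(i)}$ is then recovered from the morphism defined by the pulled-back ample bundle (you take $\mathrm{Proj}$ of the section ring, while the paper compares the images of the very ample embeddings).

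The one step to tighten is ``$k=1$''. Primitivity only gives equality of classes in $N^1(X)$, so in $\Pic(X)$ the two line bundles differ by a numerically trivial bundle. That bundle has finite order because $\Pic^{\tau}(X)/\Pic^0(X)$ is finite and $\Pic^0(X)=0$, which is exactly how the paper justifies passing to a positive multiple.
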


We also compute the automorphism group of the product of the moduli 
spaces and moduli stacks (see Corollary \ref{cor: aut gp of the product}).

\begin{corollary}
Take moduli spaces $\{M_{X_i}(r_i,L_i)\}_{i=1}^m$ that are mutually non-isomorphic, which
means that $M_{X_i}(r_i,L_i)$ is isomorphic to $M_{X_j}(r_j,L_j)$ only if $i\,=\, j$. Then
$$\Aut\Big(\prod_{i=1}^m M_{X_i}(r_i,L_i)^{n_i}\Big)\ \cong\ {\prod_{i=1}^m
\Aut\Big(M_{X_i}(r_i,L_i)\Big)}^{n_i} \rtimes S_{n_i},$$
where $S_{n_i}$ is the group of permutations of $\{1,\, \cdots, \, n_i\}$; the semi-direct product
is given by the action of $S_{n_i}$ on $M_{X_i}(r_i,L_i)^{n_i}$ which simply permutes the factors.

Similarly, $$\Aut\Big(\prod_{i=1}^m {\mathcal M}_{X_i}(r_i,L_i)^{n_i}\Big)\ \cong\ {\prod_{i=1}^m
\Aut\Big({\mathcal M}_{X_i}(r_i,L_i)\Big)}^{n_i} \rtimes S_{n_i}.$$
\end{corollary}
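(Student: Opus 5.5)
The plan is to show that every automorphism of $Z:=\prod_{i=1}^m M_{X_i}(r_i,L_i)^{n_i}$ permutes the factors and otherwise acts factor by factor. Write $Z=\prod_{k=1}^N Z_k$ with $N=\sum n_i$, where each $Z_k$ is one of the $M_{X_i}(r_i,L_i)$. Each $Z_k$ is a normal projective variety with $\rho(Z_k)=1$, trivial $\Pic^0$ and $H^1(Z_k,\mathcal O)=0$. These are exactly the inputs to Theorem \ref{thm:classification_fano}.

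First I will use the same mechanism as in the proof of that theorem. By the vanishing of $H^1(\mathcal O)$ and discreteness of the Picard groups, $\Pic(Z)=\bigoplus_k p_k^*\Pic(Z_k)\cong\Z^N$. Let $\Theta_k$ be the ample generator of $\Pic(Z_k)$. Then $\Nef(Z)$ is the simplicial cone spanned by the classes $p_k^*\Theta_k$.

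Now take $\phi\in\Aut(Z)$. The pullback $\phi^*$ preserves $\Nef(Z)$ and the lattice, so it permutes the extremal rays and their primitive generators. Hence $\phi^*p_k^*\Theta_k=p_{\tau(k)}^*\Theta_{\tau(k)}$ for some permutation $\tau$ of $\{1,\dots,N\}$.

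Next, a high power of $p_k^*\Theta_k$ is base point free, and its Stein factorization is the projection $p_k$, because $p_{k*}\mathcal O=\mathcal O$ and $\Theta_k$ is ample. It follows that $p_k\circ\phi$ and $p_{\tau(k)}$ have the same fibres. So there is a morphism $\psi_k\colon Z_{\tau(k)}\to Z_k$ with $p_k\circ\phi=\psi_k\circ p_{\tau(k)}$. Applying the same argument to $\phi^{-1}$ shows that $\psi_k$ is an isomorphism. By the hypothesis that the $M_{X_i}(r_i,L_i)$ are mutually non-isomorphic, $\tau$ preserves each block of $n_i$ indices, so $\tau\in\prod_i S_{n_i}$.

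Thus $\phi=(\psi_k)_k\circ P_\tau$, where $P_\tau$ permutes the coordinates. The map $\phi\mapsto\tau$ is a group homomorphism $\Aut(Z)\to\prod_iS_{n_i}$. It is split by the permutation automorphisms, and its kernel is $\prod_i\Aut(M_{X_i}(r_i,L_i))^{n_i}$. This gives the stated semidirect product.

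For the stacks, I would reduce to the varieties via the good moduli space. Each $\mathcal M_{X_i}(r_i,L_i)$ has good moduli space $M_{X_i}(r_i,L_i)$, and a product of good moduli space morphisms is again a good moduli space morphism. By the universal property, every automorphism of $\prod\mathcal M_{X_i}(r_i,L_i)^{n_i}$ descends to an automorphism of $Z$. This gives a homomorphism to $\prod_i\Aut(M_{X_i}(r_i,L_i))^{n_i}\rtimes S_{n_i}$, and the permutation part lifts tautologically.

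It remains to show that an automorphism of the product stack which descends to a product of factorwise automorphisms (after permuting) is itself factorwise. Alternatively, I would run the argument of the previous paragraphs directly on the stack: $\Pic(\mathcal M_X(r,L))\cong\Z$, the Picard group of the product is the direct sum, and the projection to each factor is recovered from the corresponding generator. The descent gives the permutation $\tau$, and composing with the projections gives stack morphisms $\mathcal M_{\tau(k)}\to\mathcal M_k$ that are isomorphisms.

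The main obstacle is the step showing that $\phi^*$ of a nef generator pulls back to a single projection. This is clean for the varieties via the simplicial nef cone. For stacks, the identification of morphisms through line bundles is less direct, so I expect that part to need the most care. There I would fall back on the Torelli results for stacks already cited (\cite{Alfaya2}).
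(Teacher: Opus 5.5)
Your treatment of the moduli spaces is correct and follows the paper's argument: $\phi^*$ permutes the extremal rays $\R_{\ge 0}\,p_k^*\Theta_k$ of the simplicial nef cone, and the kernel of $\phi\mapsto\tau$ is the group of factorwise automorphisms. You justify two points the paper only asserts. The first is that an automorphism fixing every extremal ray is factorwise; your linear-system/Stein-factorization step is the mechanism of the proof of Theorem~\ref{thm:classification_fano}. The second is that $\tau$ lands in $\prod_i S_{n_i}$ because the factors are mutually non-isomorphic. Working with primitive generators of $\Pic(Z)\cong\Z^N$ also spares you the paper's bookkeeping with the integers $a_i,b_i,k_i$.

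The stack half has a genuine gap, and it is the one you flag yourself. Write $\mathcal Z=\prod_i\mathcal M_{X_i}(r_i,L_i)^{n_i}$, with projections $q_k$ and good moduli maps $\pi_k$. After descending to good moduli spaces, you still need the following: an automorphism $\Phi$ of $\mathcal Z$ whose induced automorphism of $Z$ is factorwise must itself be factorwise up to $2$-isomorphism. Neither fallback supplies this.
\begin{itemize}
\item Any morphism from $\mathcal Z$ to a projective space given by sections of a line bundle factors through the good moduli space. So the Picard/linear-system argument recovers only $\pi_k\circ q_k\circ\Phi$, never $q_k\circ\Phi$.
\item Torelli-type results for a single stack, such as those in \cite{Alfaya2}, say nothing about automorphisms that mix factors.
\end{itemize}
The step is not formal. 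Suppose $\mathcal M_X(r,L)$ is read as parametrizing bundles with $\det E\cong L$ without a chosen isomorphism, and write $\mathcal M_i$, $M_i$ for the $i$-th stack and its moduli space. Take $\mathcal N$ a nontrivial line bundle on $M_2$. Then $(E_1,E_2)\mapsto(E_1\otimes\pi_2^*\mathcal N,\,E_2)$ is an automorphism of $\mathcal M_1\times\mathcal M_2$ that induces the identity on $M_1\times M_2$ but is not of product form. The paper's proof (``follows from \eqref{diag: corep of moduli stacks} and the first statement'') makes the same reduction you do and is silent on this point. So you are not missing an idea the paper provides, but a complete proof needs one.

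A route that should work uses a fixed trivialization $\det E\cong L$:
\begin{itemize}
\item Let $\Psi_k$ be the restriction of $q_k\circ\Phi$ to a slice $\mathcal M_{\tau(k)}\times\{\mathrm{pt}\}$.
\item Over the stable locus, the families classified by $q_k\circ\Phi$ and $\Psi_k\circ q_{\tau(k)}$ are pointwise isomorphic by the variety case. Hence they differ by a $\mu_{r_k}$-torsor.
\item That torsor is trivial because global units on $\mathcal Z$ are constant and $\Pic(\mathcal Z)$ is torsion-free (a seesaw argument from $\Pic(\mathcal M_{X_i}(r_i,L_i))\cong\Z$).
\item The resulting $2$-isomorphism extends across the strictly semistable locus, which has codimension at least $2$, because the stack of bundles has affine diagonal.
\end{itemize}
Then $\Psi_k$ is an isomorphism by your $\Phi^{-1}$ trick, and the variety argument goes through verbatim.
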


\section{A decomposition theorem for the product of normal projective varieties with Picard rank one and discrete Picard group.}

\subsection{Preliminaries}

Let $X$ be a normal projective variety over $\mathbb{C}$. Denote by $\Pic(X)$ the group of all 
isomorphism classes of line bundles over $X$. Let
$\Pic^{\tau}(X)\, \subset\, \Pic(X)$ be the subgroup consisting of numerically trivial line
bundles, i.e., $N^1(X)$ is the group of all numerically equivalent
classes of Cartier divisors of $X$.
The \textit{N\'eron--Severi} group of $X$ is 
defined as $$N^1(X)\ :=\ \Pic(X)/\Pic^{\tau}(X).$$
A basic fact about $N^1(X)$ is that it is a free abelian group of finite rank. The rank of $N^1(X)$ is known as 
\textit{Picard number} of $X$, which is denoted by $\rho(X)$. Consequently, $\rho(X)$ is the dimension of 
the $\mathbb R$--vector space $N^1(X)_{\mathbb{R}} \,:=\, N^1(X)\otimes_{\Z}\R$.

The well-known Nakai-Moishezon-Kleiman criterion for ampleness says that a Cartier 
divisor $D$ on $X$ is \textit{ample} if and only if $D^{\dim V}\cdot V \,>\, 0$ for
every positive-dimensional irreducible subvariety $V \,\subseteq\, X$ \cite[Theorem 
1.2.23]{Lazarsfield}. A $\R$--Cartier divisor class $D'\,\in\,N^1(X)_{\R}$ is of the 
form
$$D'\ = \ \sum_i a_i D_i,$$
where $a_i\,\in\, \R$ and $D_i\,\in\, N^1(X)$ for all $i$. A divisor class $D'$ is called 
ample if $D'^{\dim V}\cdot V \,>\,0$ for every $V \,\subseteq\, X$, or equivalently, $D'$ 
is ample if it can be expressed as $\sum_i a_i D_i$, where $a_i\,>\,0$ and $D_i$ is an 
ample Cartier divisor for every $i$. A Cartier divisor $D$ is called \textit{nef} if 
$D^{\dim V} \cdot V\, \ge\, 0$ for every $V\, \subseteq \, X$, or equivalently, if $D 
\cdot C \,\ge\, 0$ for all irreducible curves $C\,\subset\, X$ (Kleiman's theorem). A 
$\R$--Cartier divisor class $D'\,\in\, N^1(X)_{\R}$ is called nef if $D'^{\dim V}\cdot V 
\,\ge\, 0$ for every $V \,\subseteq\, X$. It is clear that any ample class is nef.

The convex cones of all ample (respectively, nef) divisor classes is known as the \textit{ample cone}
$\Amp(X)\,\subset\, N^1(X)_{\R}$ (respectively, \textit{nef cone} $\Nef(X)\,\subset\,
N^1(X)_{\R}$). 
Note that, $\Nef(X)\,=\, \overline{\Amp(X)}$ \cite{Kleiman}, \cite[Theorem 1.4.23(i)]{Lazarsfield},
in particular, $\Nef(X)$ is closed.

Next, let us recall the definition of an \textit{extremal ray}.

\begin{definition}
Let $V$ be a finite-dimensional $\R$ vector space and $K\,\subseteq\, V$ a closed convex cone. A
$1$--dimensional subcone $R\,\subseteq\, K$ is called an extremal ray if for all $u,\,v \,\in \,K$ with
$u+v \,\in\, R$ we have $u,\,v\,\in\, R$.
\end{definition}

\begin{lemma}\label{lem: amp div generating ext ray}
Let $X$ be a normal projective variety with $\rho(X)\,=\,1$. Then $\Nef(X)$ is the unique convex
cone of itself. More precisely, $$\Nef(X)\ =\ \R_{\ge 0}\cdot D,$$
where $D$ is any ample divisor on $X$.
\end{lemma}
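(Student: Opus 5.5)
Since $\rho(X)\,=\,1$, the space $N^1(X)_{\R}$ is a one-dimensional real vector space, so it is isomorphic to $\R$. The plan is to fix an ample Cartier divisor $D$ (which exists because $X$ is projective) and observe that its class is nonzero. Once that is settled, both $\Nef(X)$ and $\Amp(X)$ can be identified inside the line $\R\cdot[D]$.

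The first step is to show $[D]\,\neq\, 0$ in $N^1(X)_{\R}$. Take any irreducible curve $C\,\subset\, X$; such curves exist since we may assume $\dim X\,\ge\, 1$. The Nakai--Moishezon--Kleiman criterion, in the form recalled above, gives $D\cdot C\,>\,0$. Intersection numbers factor through numerical equivalence, so $D$ is not numerically trivial. Because $D$ is an integral class and $N^1(X)$ is free of rank one, its image in $N^1(X)_{\R}$ is nonzero, and therefore $N^1(X)_{\R}\,=\,\R\cdot [D]$.

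Next, write an arbitrary class as $tD$ with $t\,\in\,\R$ and test nefness on a fixed curve $C$. The number $(tD)\cdot C\,=\,t\,(D\cdot C)$ is $\ge 0$ exactly when $t\,\ge\, 0$. Hence a nef class must have $t\,\ge\,0$, which gives $\Nef(X)\,\subseteq\, \R_{\ge0}\cdot D$. For the reverse inclusion, every $tD$ with $t\,>\,0$ is ample (it is a positive multiple of an ample divisor, using the characterization of ample $\R$-classes recalled above), and so it is nef. The class $0$ is nef trivially. Alternatively, one can use $\Nef(X)\,=\,\overline{\Amp(X)}$, which is closed, so it contains $0$ as the limit of $tD$ as $t\to 0^+$. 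This gives $\Nef(X)\,=\,\R_{\ge0}\cdot D$.

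It remains to deal with "any ample $D$" and with "the unique convex cone". If $D'$ is another ample divisor, then $D'\,=\,sD$ in $N^1(X)_{\R}$ for some $s$, and $s\,>\,0$ because $D'\cdot C\,>\,0$; so the ray $\R_{\ge 0}\cdot D$ does not depend on the choice of $D$. For "the unique convex cone of itself", I would read it as saying that $\Nef(X)$ is a single ray, so that it is its own unique extremal ray. To check this against the definition of extremal ray, take $u,\,v$ in the cone with $u+v$ in the cone; then $u$ and $v$ trivially lie in the cone, since it is the whole cone. There is no real obstacle in this argument. The only points that need care are the nonvanishing of $[D]$ and the existence of a curve $C$, and both are routine.
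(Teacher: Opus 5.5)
Your proof is correct, but it takes a different route from the paper's.

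\textbf{The paper's route.} The paper works at the level of cones. Since $N^1(X)_{\R}$ is a line and $\Nef(X)$ is a closed convex cone that is salient, i.e.\ $\Nef(X)\cap(-\Nef(X))=\{0\}$, the cone is at most a single ray. Kleiman's theorem that $\Amp(X)$ is the interior of $\Nef(X)$ then shows that every ample class lies in, and so generates, that ray.

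\textbf{Your route.} You use neither the salientness observation nor the interior theorem. A single curve $C$ with $D\cdot C>0$ shows $[D]\neq 0$, so $N^1(X)_{\R}=\R\cdot[D]$. The sign of $t\,(D\cdot C)$ then decides the nefness of $tD$ directly.

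\textbf{What each buys.} Your argument is more elementary: it needs only the easy direction of Nakai--Moishezon and the curve test for nefness. It also makes explicit that an ample class is nonzero. The paper's argument needs that fact implicitly to exclude $\Nef(X)=\{0\}$, and there it comes from $\Amp(X)$ being a nonempty interior. The paper's version is more conceptual and fits directly into the extremal-ray language used in Lemma~\ref{lem:cone} and Theorem~\ref{thm:classification_fano}. Your reading of ``the unique convex cone of itself'' as ``its own unique extremal ray'' is the sensible one and matches that later use.

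\textbf{One minor fix.} You should not say you ``may assume'' $\dim X\ge 1$. It is forced, since a point has $\rho=0$.
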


\begin{proof}
Since $\rho(X)\,=\,1$, it follows that $N^1(X)_{\R}$ is a $1$--dimensional $\R$ vector space.
Moreover, $\Nef(X)$ is a closed convex cone and it is clear from the definition of $\Nef(X)$ that it is
salient, i.e., $$\Nef(X) \cap (-\Nef(X))\ =\ \{0\}.$$ So
$\Nef(X)$ is the unique convex cone of itself. Also,
$$\Nef(X)\ =\ \R_{\ge 0}\cdot D'$$ for any $D'\,\in\, \Amp(X)$, because $\Amp(X)$ is the interior
of $\Nef(X)$ \cite{Kleiman}, \cite[Theorem 1.4.23(ii)]{Lazarsfield}.
\end{proof}

\begin{lemma}[{\cite[Ex. III.12.6]{hartshorne}}]\label{lem:iso of pic gps}
Let $X_1$ and $X_2$ be two normal projective varieties with $\Pic^0(X_1)\,=\,0$. Then there
is a natural isomorphism
$$
\Pic(X_1 \times X_2)\ \cong\ \Pic(X_1) \times \Pic(X_2). 
$$
\end{lemma}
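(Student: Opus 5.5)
The plan is to show that the natural homomorphism
$$\phi\,:\,\Pic(X_1)\times\Pic(X_2)\ \longrightarrow\ \Pic(X_1\times X_2),\qquad (L_1,\,L_2)\ \longmapsto\ p_1^*L_1\otimes p_2^*L_2,$$
is an isomorphism, where $p_1$ and $p_2$ are the two projections. Since $\phi$ is a group homomorphism, it suffices to prove that it is injective and surjective. Throughout we use that $X_1$ and $X_2$ are integral, projective and connected, so that $H^0(X_i,\,{\mathcal O}_{X_i})\,=\,\C$.

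\emph{Injectivity.} Suppose $p_1^*L_1\otimes p_2^*L_2$ is trivial. Fix points $x\,\in\, X_1$ and $y\,\in\, X_2$.
\begin{itemize}
\item Restricting to the slice $X_1\times\{y\}\,\cong\, X_1$ gives $L_1\otimes {\mathcal O}_{X_1}\,\cong\, {\mathcal O}_{X_1}$, since $p_2^*L_2$ restricts to a trivial bundle there. Hence $L_1$ is trivial.
\item Restricting instead to $\{x\}\times X_2$ shows in the same way that $L_2$ is trivial.
\end{itemize}
So $\ker\phi$ is trivial.

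\emph{Surjectivity.} Let $L$ be a line bundle on $X_1\times X_2$. The first step is to show that the restriction $L_y\,:=\,L|_{X_1\times\{y\}}$ has an isomorphism class independent of $y\,\in\, X_2$.
\begin{itemize}
\item Since $X_1$ is an integral projective variety, its Picard scheme $\mathrm{Pic}_{X_1}$ exists; we may choose a base point on $X_1$ to rigidify.
\item The family $L$ then defines a morphism $f\,:\,X_2\,\longrightarrow\, \mathrm{Pic}_{X_1}$ with $f(y)\,=\,[L_y]$.
\item Since $X_2$ is connected, $f$ lands in a single connected component of $\mathrm{Pic}_{X_1}$. Each such component is a torsor under $\mathrm{Pic}^0_{X_1}$.
\item The hypothesis $\Pic^0(X_1)\,=\,0$ says this identity component has a single point. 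Since $X_2$ is reduced, the morphism $f$ is therefore constant on closed points.
\end{itemize}
Thus $L_y\,\cong\, L_1$ for a fixed $L_1\,\in\, \Pic(X_1)$ and all $y$.

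\emph{Seesaw step.} Set $M\,:=\,L\otimes p_1^*L_1^{-1}$. Then $M$ is trivial on every fiber of the flat projective morphism $p_2$, so $h^0(X_1\times\{y\},\, M_y)\,=\,1$ for every $y$. Since $X_2$ is reduced, Grauert's theorem (cohomology and base change) implies that $N\,:=\,p_{2*}M$ is a line bundle on $X_2$. Moreover, the natural map $p_2^*N\,\longrightarrow\, M$ is an isomorphism on each fiber, and hence an isomorphism. Therefore
$$L\ \cong\ p_1^*L_1\otimes p_2^*N\ =\ \phi(L_1,\,N),$$
so $\phi$ is surjective. Naturality of $\phi$ is clear from its definition.

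\emph{Main obstacle.} The delicate point is deducing that $y\,\longmapsto\, [L_y]$ is constant from $\Pic^0(X_1)\,=\,0$. This step needs the Picard scheme and the reducedness of $X_2$; without reducedness, a nonreduced $\mathrm{Pic}^0_{X_1}$ could still permit nonconstant families. Since $X_2$ is normal, and in particular reduced, a morphism from it to a scheme whose connected components have a single point is constant. This is where the normality assumption enters.
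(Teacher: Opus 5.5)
Your proof is correct and follows essentially the same route as the paper. Injectivity comes from restricting to the slices $X_1\times\{y\}$ and $\{x\}\times X_2$. Constancy of $y\mapsto [L|_{X_1\times\{y\}}]$ comes from the discreteness of $\Pic(X_1)$. The seesaw principle then finishes; you prove it via Grauert's theorem, whereas the paper simply cites Mumford.

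One small correction to your closing remark: normality is never used. The pointwise constancy only needs the relevant component of the Picard scheme to have a single $\C$-point. Reducedness of $X_2$ enters only in the Grauert/seesaw step.
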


\begin{proof}
Consider the homomorphism $$ \Phi\ :\ \Pic(X_1) \times \Pic(X_2)\ \longrightarrow\ \Pic(X_1 \times X_2),
\ \ \ (L_1,\, L_2)\ \longmapsto\ \pi_1^*L_1 \otimes \pi_2^*L_2,$$
where $\pi_i\, :\, X_1\times X_2\, \longrightarrow\, X_i$, $i\,\in\, \{1,\, 2\}$,
is the natural projection.
It is evident that $\Phi$ is injective; indeed, the restriction of $\pi_1^*L_1 \otimes \pi_2^*L_2$ to
$X_1\times\{y\}\, \subset\, X_1\times X_2$ (respectively, $\{z\}\times X_2\, \subset\, X_1\times X_2$)
is $L_1$ (respectively, $L_2$).

For the surjectivity of $\Phi$, take any $L \,\in\, \Pic(X_1 \times X_2)$, and
consider the map $\psi\,:\, X_2\, \longrightarrow\, \Pic(X_1)$ defined by
$y\, \longmapsto\, L\big\vert_{X_1 \times \{y\}}$. Since $\Pic^0(X_1)$ is trivial, it follows
that $\Pic(X_1)$ is discrete, and hence $\psi$ is a constant map.
Consider the line bundle $L\otimes \pi^*_1 \psi(X_2)^*$ on $X_1\times X_2$;
note that $\psi(X_2)$ is a line bundle on $X_1$. By the Seesaw
theorem (cf. \cite[p.~51, Corollary 6]{Mumford}) $L\otimes \pi^*_1 \psi(X_2)^*$
is of the form $\pi^*_2 L'$ for some
$L'\, \in\, \Pic(X_2)$. Consequently, we have $L\,=\, \pi^*_1 \psi(X_2)\otimes p^*_2 L'$.
Thus $\Phi$ is also surjective.
\end{proof}

\begin{lemma}
Let $X_1$ and $X_2$ be two normal projective varieties such that $\Pic^0(X_1)\,=\, 0\,
=\, \Pic^0(X_2)$. Then
\begin{equation*}
N^1(X_1 \times X_2)\,\ \cong\,\ N^1(X_1)\,\times\, N^1(X_2).
\end{equation*}
\end{lemma}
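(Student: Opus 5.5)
We would deduce this from Lemma \ref{lem:iso of pic gps} by passing to the quotient by numerically trivial classes. By that lemma (which only needs $\Pic^0(X_1)\,=\,0$), the map $\Phi(L_1,\,L_2)\,=\,\pi_1^*L_1\otimes\pi_2^*L_2$ is an isomorphism $\Pic(X_1)\times\Pic(X_2)\,\cong\,\Pic(X_1\times X_2)$. Composing with the quotient map gives a surjection
$$\Pic(X_1)\times \Pic(X_2)\ \longrightarrow\ N^1(X_1\times X_2).$$
The task is to show that its kernel is exactly $\Pic^{\tau}(X_1)\times\Pic^{\tau}(X_2)$. Then $\Phi$ descends to the desired isomorphism $N^1(X_1)\times N^1(X_2)\,\cong\, N^1(X_1\times X_2)$.

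First we show the kernel contains $\Pic^{\tau}(X_1)\times\Pic^{\tau}(X_2)$. Take $L_1$ numerically trivial on $X_1$ and any irreducible curve $C\,\subset\, X_1\times X_2$. By the projection formula, $\pi_1^*L_1\cdot C\,=\,L_1\cdot (\pi_1)_*C$. This vanishes: either $\pi_1(C)$ is a point, in which case $(\pi_1)_*C\,=\,0$, or $(\pi_1)_*C$ is a positive multiple of the curve $\pi_1(C)$, on which $L_1$ has degree $0$. The same argument applies to $\pi_2^*L_2$. So $\Phi$ sends numerically trivial pairs to numerically trivial bundles.

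Next we show the kernel is no larger. Suppose $\pi_1^*L_1\otimes\pi_2^*L_2$ is numerically trivial. For a curve $C_1\,\subset\, X_1$ and a point $y\,\in\, X_2$, restrict to the curve $C_1\times\{y\}$. The restriction of $\pi_2^*L_2$ there is trivial and the restriction of $\pi_1^*L_1$ is $L_1|_{C_1}$, so
$$0\ =\ (\pi_1^*L_1\otimes\pi_2^*L_2)\cdot (C_1\times\{y\})\ =\ \deg (L_1|_{C_1}).$$
Hence $L_1\,\in\,\Pic^{\tau}(X_1)$. Symmetrically, using curves $\{x\}\times C_2$, we get $L_2\,\in\,\Pic^{\tau}(X_2)$.

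With both inclusions in hand, the induced map on quotients is well defined, injective and surjective, hence an isomorphism of groups; tensoring with $\R$ gives the statement for $N^1(\cdot)_{\R}$ as well. The only point needing care is the projection-formula step for a curve that maps finitely onto its image (or contracts to a point). This is standard for proper morphisms of projective varieties, and normality plays no role there. The hypothesis $\Pic^0(X_2)\,=\,0$ is not needed for this argument, only $\Pic^0(X_1)\,=\,0$ through Lemma \ref{lem:iso of pic gps}.
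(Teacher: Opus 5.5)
Your argument is correct and follows the same route as the paper, whose proof only says that the lemma follows from Lemma \ref{lem:iso of pic gps} and the definition of $N^1$. Your check that the kernel of $\Pic(X_1)\times\Pic(X_2)\to N^1(X_1\times X_2)$ is exactly $\Pic^{\tau}(X_1)\times\Pic^{\tau}(X_2)$ (projection formula in one direction, restriction to the curves $C_1\times\{y\}$ and $\{x\}\times C_2$ in the other) supplies precisely the details the paper leaves implicit, and your remark that only $\Pic^0(X_1)=0$ is used is also right.
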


\begin{proof}
This follows from Lemma \ref{lem:iso of pic gps}
and the definition of the N\'eron-Severi group.
\end{proof}

Since $\Pic^0(X_1\times X_2)\,=\, \Pic^0(X_1) \times \Pic^0(X_2)$ \cite[Proposition 
2.4]{Nobel}, for a product \(X \,=\, \prod_{i=1}^n X_i\) of normal projective varieties 
with trivial $\Pic^0(X_i)$, we have
\begin{equation}\label{eq:neron severi decomp}
    N^1(X)\ \;\cong\ \; \bigoplus_{i=1}^n N^1(X_i), 
\end{equation}
and therefore
\begin{equation}\label{eq:neron severi decomp with R}
    N^1(X)_{\R}\ \;\cong\ \; \bigoplus_{i=1}^n N^1(X_i)_{\R},
\end{equation}

\subsection{Classification}

\begin{lemma}\label{lem:cone}
Let $X \,=\, \prod_{i=1}^n X_i$ be a product of normal projective varieties with $\rho(X_i)
\,=\, 1$ and $\Pic^0(X_i)\,=\, 0$ for all $i$. Then the following two statements hold:
\begin{enumerate}
\item There is a natural isomorphism $\Nef(X)\,\cong\, \bigoplus_{i=1}^n \Nef(X_i)$. 

\item These are the only extremal rays in $\Nef(X)$.
\end{enumerate}
\end{lemma}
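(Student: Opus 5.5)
Under the decomposition \eqref{eq:neron severi decomp with R}, I would identify $N^1(X)_{\R}$ with $\bigoplus_{i=1}^n N^1(X_i)_{\R}$, where each summand is a line $\R\cdot D_i$ and $D_i$ is an ample generator on $X_i$ (Lemma \ref{lem: amp div generating ext ray}). A class in $N^1(X)_{\R}$ is then written uniquely as $D=\sum_i a_i\,\pi_i^*D_i$ with $a_i\in\R$. For part (1) I would show that $D$ is nef if and only if $a_i\ge 0$ for all $i$; this gives $\Nef(X)=\bigoplus_i \R_{\ge 0}\pi_i^*D_i\cong\bigoplus_i\Nef(X_i)$.

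The direction ``$a_i\ge 0$ implies nef'' is easy. Pullback of a nef class under a morphism is nef, since $\pi_i^*D_i\cdot C=D_i\cdot (\pi_i)_*C\ge 0$ by the projection formula. Also $\Nef(X)$ is a convex cone, so nonnegative combinations of the $\pi_i^*D_i$ are nef.

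For the converse I would test against curves lying in a single factor. Fix points $x_k\in X_k$ for $k\ne i$, and let $C\subset X_i$ be an irreducible curve; such a curve exists whenever $\dim X_i\ge 1$, which I assume since a zero-dimensional factor is a point and can be discarded. Put $C_i=\{x_1\}\times\cdots\times C\times\cdots\times\{x_n\}$. For $k\ne i$ the composite $C_i\to X_k$ is constant, so $\pi_k^*D_k\cdot C_i=0$. For $k=i$ we get $\pi_i^*D_i\cdot C_i=D_i\cdot C>0$ because $D_i$ is ample. Hence $D\cdot C_i=a_i(D_i\cdot C)$, and nefness of $D$ forces $a_i\ge 0$. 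This completes (1).

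For (2), the cone $\Nef(X)$ is now the simplicial cone spanned by the linearly independent vectors $e_i=\pi_i^*D_i$, so it is isomorphic to the orthant $\R_{\ge0}^n$. Each ray $\R_{\ge0}e_i$ is extremal. Indeed, if $u+v=t e_i$ with $u=\sum u_k e_k$ and $v=\sum v_k e_k$ in the cone, then for $k\ne i$ we have $u_k+v_k=0$ with $u_k,v_k\ge0$, so $u_k=v_k=0$. Conversely, let $R=\R_{\ge0}w$ be an extremal ray with $w=\sum w_k e_k$. If two coordinates $w_j,w_k$ were positive, then $w=w_je_j+(w-w_je_j)$ would be a decomposition into cone elements not lying on $R$, a contradiction. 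So $w$ is a positive multiple of some $e_i$.

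The only step needing care is the converse inclusion in (1). It relies on the slice curves $C_i$ and on the vanishing of $\pi_k^*D_k\cdot C_i$ for $k\ne i$, both of which follow directly from the projection formula. The rest is convex geometry of an orthant.
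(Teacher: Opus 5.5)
Your proof is correct and follows essentially the same route as the paper. Pullbacks give $\bigoplus_i \Nef(X_i)\subseteq\Nef(X)$, and testing on slices $X_j\times\prod_{i\ne j}\{x_i\}$ gives the reverse inclusion; you phrase this via intersection with curves in a slice, the paper via restriction of the class. Part (2) is the same coordinate-decomposition argument, and your version is slightly more careful, since you allow $a_i\ge 0$ rather than $a_i>0$ and explicitly check that each ray $\R_{\ge 0}\,\pi_i^*D_i$ is extremal, which the paper leaves implicit.
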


\begin{proof}
\textbf{Statement (1):}\,\, For $1\, \leq\, j\, \leq\, n$, let $\pi_j\, :\, \prod_{i=1}^n X_i
\, \longrightarrow\, X_j$ be the natural projection.
Since pullback of a nef divisor is nef and the sum of a finite number of nef
divisors is nef, it is evident that $\bigoplus_{i=1}^n \Nef(X_i) \,\subseteq\, \Nef(X)$, where
$\Nef(X_i)$ is considered as a subgroup of $\Nef(X)$ using pulling by the projection $\pi_i$.

To prove the reverse inclusion, take any nef class $D\,\in\, \Nef(X)\,\subset\, N^1(X)_{\R}$.
For any $1\, \leq\, j\, \leq\, n$, the restriction of $D$ to $X_j\,=\, X_j\times\prod_{i\ne j}\{x_i\}$,
where $x_i\, \in\, X_i$, lies in $\Nef(X_j)$. Using \eqref{eq:neron severi decomp with R} this implies
that $\Nef(X)\, \subset\, \bigoplus_{i=1}^n \Nef(X_i)$. This proves Statement (1).

\textbf{Statement (2):}\,\, Let $R\,=\,\mathbb{R}_{\ge 0}\cdot D$ be any extremal ray of
$\Nef(X)$ and $v\,\in\, R$ a non-zero vector. Since $v\,\in\, \Nef(X)$, using
Statement (1), let
 \begin{equation*}
v\ =\ \sum_{i=1}^n a_i\cdot\pi_{i}^*D_i
 \end{equation*}
with $a_i\, >\, 0$ and $0\, \not=\, D_i\,\in\, \Nef(X_i)$. Since $R$ is an
extremal ray, this implies that each component $a_i\cdot\pi_{i}^*D_i$, lies in $R$, i.e., 
\begin{equation*}
\begin{split}
 & a'_i\cdot D\ =\ a_i\cdot\pi_{i}^*D_i,
\end{split}
\end{equation*}
where $a'_i\, >\, 0$. We have
 \begin{equation*}
      a'_ia_j\pi_{j}^*D_j\ =\ a'_ia'_j D\ =\ a'_ja_i\pi_{i}^*D_i
 \end{equation*}
for all $1\,\le\, i,\,j\,\le\, n$. Therefore,
$\pi_{i}^*D_i$ and $\pi_{j}^*D_j$ are scalar multiple of each other. Which is a contradiction, since they are
linearly independent in $N^1(X)_{\R}$. 
Consequently, we have $v\,=\,a_i\cdot\pi_{i}^*D_i\,\in\, \Nef(X_i)$ for a fixed $i$. This implies
that $R\,\subseteq\, \Nef(X_i)$, and hence $R\,=\,\Nef(X_i)$. This completes the proof.
\end{proof}

The following is the main result of this section.

\begin{theorem}\label{thm:classification_fano}
Let $X_1,\, \cdots,\, X_m$ and $Y_1,\, \cdots,\, Y_n$ be normal projective varieties, with
$\rho(X_i)\, =\, 1\,=\, \rho(Y_j)$ and  $\Pic^0(X_i)\,=\, 0\,=\, \Pic^0(Y_j)$ for all
$1\, \leq\, i\, \leq\, m$ and $1\, \leq\, j\, \leq\, n$. If there exists an isomorphism
$$ X\ :=\ \prod_{i=1}^m X_i\ \cong\ \prod_{j=1}^n Y_j\ =:\ Y,$$
then $m\,=\,n$ and there is a permutation $\sigma$ of $\{1,\, \cdots, \, m\}$ such that
$X_i \,\cong\, Y_{\sigma(i)}$ for all $1\, \leq\, i\, \leq\, m$.
\end{theorem}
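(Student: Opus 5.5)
Let $f\colon X\to Y$ be the given isomorphism. Pullback $f^*\colon N^1(Y)_{\R}\to N^1(X)_{\R}$ is a linear isomorphism carrying $\Nef(Y)$ onto $\Nef(X)$, since nefness is intrinsic. By \eqref{eq:neron severi decomp with R} and $\rho(X_i)=\rho(Y_j)=1$ we have $\dim N^1(X)_{\R}=m$ and $\dim N^1(Y)_{\R}=n$, so $m=n$ at once. By Lemma \ref{lem:cone}, the extremal rays of $\Nef(X)$ are exactly the $m$ rays $R_i:=\pi_i^*\Nef(X_i)$, and those of $\Nef(Y)$ are the rays $R'_j:=p_j^*\Nef(Y_j)$, where $p_j\colon Y\to Y_j$ are the projections. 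A linear isomorphism of cones sends extremal rays bijectively to extremal rays. Hence there is a permutation $\tau$ with $f^*R'_j=R_{\tau(j)}$, and we set $\sigma:=\tau^{-1}$.

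Next I would realize this matching geometrically. Choose an ample line bundle $H_j$ on $Y_j$; by Lemma \ref{lem: amp div generating ext ray} it spans $\Nef(Y_j)$. Then $f^*p_j^*H_j$ is numerically a positive multiple of $\pi_{\tau(j)}^*A$ for an ample class $A$ on $X_{\tau(j)}$. I would use Lemma \ref{lem:iso of pic gps}, iterated, to write $f^*p_j^*H_j\cong\bigotimes_i\pi_i^*L_i$. Restricting to the slices $X_i\times\prod_{k\ne i}\{x_k\}$ shows that $L_i$ is numerically trivial for $i\ne\tau(j)$. I then need $L_i$ to be trivial in $\Pic(X_i)$, not just numerically trivial.

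This is the main obstacle. Since $\Pic^0(X_i)=0$, numerically trivial classes lie in the torsion subgroup $\Pic^\tau(X_i)/\Pic^0(X_i)$, which is finite. To get around torsion, I would replace $H_j$ by a sufficiently divisible power $H_j^{\otimes N}$, with $N$ killing all these finite groups. This is harmless for the argument, because I only need a morphism, not a specific bundle.

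After this replacement, $f^*p_j^*H_j^{N}\cong\pi_{\tau(j)}^*L$ with $L$ ample on $X_{\tau(j)}$. Take $N$ large enough that $H_j^N$ is very ample, so that $p_j$ composed with the associated embedding is given by the complete linear system. It follows that $p_j\circ f$ is constant on the fibers of $\pi_{\tau(j)}$. Indeed, $p_j\circ f$ restricted to a fiber $\pi_{\tau(j)}^{-1}(x)$ is a morphism from a connected projective variety to $Y_j$ along which the pullback of an ample bundle is trivial, so it contracts the whole fiber. Because the $\pi_i$ have connected fibers and normal targets (rigidity lemma / Zariski main theorem), we get a morphism $g_j\colon X_{\tau(j)}\to Y_j$ with $p_j\circ f=g_j\circ\pi_{\tau(j)}$.

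Applying the same argument to $f^{-1}$ gives morphisms $h_i\colon Y_{\sigma(i)}\to X_i$ with $\pi_i\circ f^{-1}=h_i\circ p_{\sigma(i)}$. Composing these relations yields $h_{\tau(j)}\circ g_j\circ\pi_{\tau(j)}=\pi_{\tau(j)}$, so $h_{\tau(j)}\circ g_j=\mathrm{id}$, and symmetrically $g_j\circ h_{\tau(j)}=\mathrm{id}$. Therefore $X_i\cong Y_{\sigma(i)}$ for every $i$, which proves the theorem.
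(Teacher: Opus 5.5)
Your proof is correct, and its first half is essentially the paper's: you match the extremal rays of the nef cones, then kill the finite group $\Pic^{\tau}/\Pic^0$ by passing to a power. (You apply the finiteness on the factors after restricting to slices; the paper applies it on $X$ itself, a cosmetic difference.) The real difference is how you turn $f^*p_j^*H_j^{N}\cong \pi_{\tau(j)}^*L$ into an isomorphism of factors.

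The paper makes both bundles very ample and identifies $H^0(Y,\,p_j^*H_j^N)$ with $H^0(X_{\tau(j)},\,L)$. It then observes that the single morphism to $\mathbb{P}(\mathcal W)$ given by this complete linear system factors through $\pi_{\tau(j)}$ on one side and through $p_j\circ f$ on the other. So $X_{\tau(j)}$ and $Y_j$ are embedded as the same subvariety.

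You instead note that an ample bundle pulling back trivially to the connected fibers of $\pi_{\tau(j)}$ forces $p_j\circ f$ to contract them. This produces $g_j$ with $p_j\circ f=g_j\circ\pi_{\tau(j)}$, and you obtain the inverse by running the same argument for $f^{-1}$.

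What your route buys:
\begin{enumerate}
\item It needs only ampleness. Your sentence about very ampleness and complete linear systems is never used and can be dropped.
\item The descent step needs neither the rigidity lemma nor Zariski's main theorem. The projection $\pi_{\tau(j)}$ has sections $s$ (fix points in the other factors), and $g_j:=p_j\circ f\circ s$ works directly, since $p_j\circ f$ is constant on fibers.
\item It shows explicitly that $f(x)=(g_j(x_{\tau(j)}))_j$, i.e.\ $f$ is a product of isomorphisms composed with a permutation of the factors. This is exactly what the kernel computation in Corollary~\ref{cor: aut gp of the product} requires, and the paper asserts it there without argument. It could also be extracted from the paper's commutative diagram.
\end{enumerate}

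In exchange, you need the symmetric argument for $f^{-1}$, whereas the paper gets the isomorphism in one stroke from the equality of the two images in $\mathbb{P}(\mathcal W)$.
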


\begin{proof}
Fix an isomorphism $$\Phi\ :\ X\ \xrightarrow{\,\,\,\simeq\,\,\,}\ Y.$$ We have an induced isomorphism
\begin{equation}\label{ps}
\Phi^*\ :\ N^1(Y)_{\R}\ \xrightarrow{\,\,\,\simeq\,\,\,}\ N^1(X)_{\R}.
\end{equation}
Therefore, following \eqref{eq:neron severi decomp with R}, $m\,=\,\rho(X)\,=\,\rho(Y)\,=\,n.$

The isomorphism $\Phi^*$ in \eqref{ps} preserves nefness, i.e., $\Phi^*\,:\,\Nef(Y)\,
\xrightarrow{\,\,\,\simeq\,\,\,}\,\Nef(X)$. Moreover, $\Phi^*$ evidently induces a bijection between the
extremal rays of $\Nef(X)$ and $\Nef(Y)$ \cite[Example 1.4.4]{Lazarsfield} . In view of Lemma \ref{lem:cone}, this implies that there is
permutation $\sigma$ of $\{1,\, \cdots, \, m\}$ such that the isomorphism $\Phi^*$ satisfies the
following condition:
\begin{equation}\label{p1}
\R_{\ge 0}\cdot \varpi_{\sigma(i)}^*D'_{\sigma(i)}\ \cong \Nef(Y_{\sigma(i)})\
\xrightarrow[\,\,\,\Phi^*\,\,\,]{\,\,\,\simeq\,\,\,}\ \Nef(X_i)\ \cong\ \R_{\ge 0}\cdot \pi_i^*D_i,
\end{equation}
where $D_i$ and $D'_{\sigma(i)}$ are ample generators of $\Nef(X_i)$ and $\Nef(Y_{\sigma(i)})$
respectively (cf. Lemma \ref{lem: amp div generating ext ray}), while $\pi_i\, :\, X
\, \longrightarrow\, X_i$ and $\varpi_i\, :\, Y \, \longrightarrow\, Y_i$ are the
natural projections for all $1\, \leq\, i\, \leq\, m$. Note that from Lemma \ref{lem:cone}
it follows that $\{\R_{\ge 0}\cdot D_i\}_{i=1}^m$ (respectively, $\{\R_{\ge 0}\cdot D'_i\}_{i=1}^m$) are
the extremal rays in $\Nef(X)$ (respectively, $\Nef(Y)$).

The homomorphism $\Phi^*\,:\, N^1(Y)\,\xrightarrow{\,\,\,\simeq\,\,\,}\, N^1(X)$ induced
by $\Phi$ is an isomorphism of abelian
groups, i.e., an isomorphism of $\Z$--modules. Since $\Phi^*(\varpi_{\sigma(i)}^*D'_{\sigma(i)})\,\in\,
\R_{\ge 0}\cdot \pi_i^*D_i$, there are positive integers $a_i$ and $b_i$ such that  
$$b_i\cdot \Phi^*(\varpi_{\sigma(i)}^*D'_{\sigma(i)})\ =\ a_i\cdot \pi_i^*D_i $$ 
as elements of $\Nef(X)$. This implies that
$b_i\cdot \Phi^*(\varpi_{\sigma(i)}^*D'_{\sigma(i)})$ and $a_i \cdot \pi_i^*D_i$ are numerically equivalent classes
over $X$. We know that, $\Pic^{\tau}(X)/\Pic^0(X)$ is a finite torsion group
\cite[Theorem 9.6.3, Corollary 9.6.17]{Fantechi}. Since, $\Pic^0(X)$ is trivial, there exists a positive integer
$k_i\,\in\, \Z_{\ge 0}$ such that 
\begin{equation}\label{p2}
k_ib_i\cdot \Phi^*(\varpi_{\sigma(i)}^*D'_{\sigma(i)})\ \cong\ k_ia_i\cdot \pi_i^*D_i
\end{equation}
as Cartier divisors over $X$. Since $D_i$ and $D'_{\sigma(i)}$ are ample, we may choose
\begin{equation}\label{p3}
n_i\,\ >>\,\ 0
\end{equation}
such that $n_i\cdot D_i$ and $n_i\cdot D'_{\sigma(i)}$ are very ample over $X$ and $Y$
respectively. So from \eqref{p2} it follows that
\begin{equation}\label{eq: iso of cartier div}
n_i k_ib_i\cdot \Phi^*(\varpi_{\sigma(i)}^*D'_{\sigma(i)})\ \cong\ n_i k_i m_i\cdot \pi_i^*D_i
\end{equation}
as Cartier divisors over $X$, and the divisors in \eqref{eq: iso of cartier div} are very ample.

For notational convenience, denote by let $L_i$ (respectively, $L'_{\sigma(i)}$) the line bundle
over $X_i$ (respectively, $Y_i$) corresponding to the divisor $n_i k_i a_i\cdot D_i$
(respectively, $n_i k_ib_i\cdot D'_{\sigma(i)}$). Using this notation, the
isomorphism in \eqref{eq: iso of cartier div} can be rewritten as 
\begin{equation}\label{eq: iso of line bundles}
\Phi^*\varpi_{\sigma(i)}^*L'_{\sigma(i)}\ \, \cong\ \, \pi_i^* L_i.
\end{equation}

Note that the line bundles $L_i$ and $L'_{\sigma(i)}$ are very ample; see \eqref{p3}.
In particular, the line bundles $\pi_i^*L_i$ and $\varpi_i^*L'_i$ are basepoint free. Also, global
sections of $\pi^*_iL_i$ are exactly the pullbacks of sections of $L_i$, i.e.,
$H^0(X,\, \pi_i^*L_i)\,\cong\, H^0(X_i,\, L_i).$ So, the associated morphism
$$\Psi_{\pi^*L_i}\ :\ X\ \longrightarrow\ {\mathbb P}(H^0(X,\, \pi^*_iL_i))\ =\ {\mathbb P}(H^0(X_i,\, L_i))$$
depends only on the $i$--th factor, i.e.,
$\Psi_{\pi^*L_i}$ fits in the diagram 
\begin{center}
     \begin{tikzcd}[column sep=large, row sep=large]
X \ar[d, "\pi_i"', rightarrow] \ar[r, "\Psi_{\pi_i^*L_i}"] & {\mathbb P}(H^0(X,\, \pi^*_iL_i))\,
=\, {\mathbb P}(H^0(X_i,\, L_i)) \\
X_i \ar[ru, description, "\Psi_{L_i} "']
\end{tikzcd}
 \end{center}  
where $\Psi_{L_i}$ is a closed embedding since $L_i$ is very ample. Therefore, the image of
$\Psi_{\pi_i^*L_i}$ is isomorphic to $X_i$. 

Similarly, since $L'_{\sigma(i)}$ is very ample and $N=
h^0(Y, \pi_{\sigma(i)}^*L'_{\sigma(i)})
$(cf. equation \eqref{eq: iso of line bundles}), we have a commutative diagram 
\begin{center}
     \begin{tikzcd}[column sep=large, row sep=large]
Y \ar[d, "\varpi_{\sigma(i)}"', rightarrow] \ar[r, "\Psi_{\varpi_{\sigma(i)}^*L'_{\sigma(i)}}"] &
{\mathbb P}(H^0(Y,\,\varpi^*_{\sigma(i)}L'_{\sigma(i)}))\,=\,{\mathbb P}(H^0(Y_{\sigma(i)},\,L'_{\sigma(i)}))\\
Y_{\sigma(i)} \ar[ru, description, "\Psi_{L'_{\sigma(i)}} "']
\end{tikzcd}
 \end{center}
where $\Psi_{L'_{\sigma(i)}}$ is a closed embedding. Consequently, the image of
$\Psi_{\varpi_{\sigma(i)}^*L'_{\sigma(i)}}$ is isomorphic to $Y_{\sigma(i)}$.

The isomorphism in \eqref{eq: iso of line bundles} produces an isomorphism of global sections
\begin{equation}\label{q1}
H^0(X,\, \Phi^*\varpi^*_{\sigma(i)}L'_{\sigma(i)})\ \, \cong\ \, H^0(X,\, \pi_i^* L_i).
\end{equation}
But $H^0(X,\, \Phi^*\varpi^*_{\sigma(i)}L'_{\sigma(i)})\,=\, H^0(Y,\, 
\varpi^*_{\sigma(i)}L'_{\sigma(i)})$. So \eqref{q1} gives an isomorphism
\begin{equation}\label{q2}
H^0(Y,\, \varpi^*_{\sigma(i)}L'_{\sigma(i)}) \ \, \cong\ \, H^0(X,\, \pi_i^* L_i).
\end{equation}
Hence we have the commutative diagram
\[
\begin{tikzpicture}[>=Stealth]

\node (X)  at (-3,3) {$X$};
\node (Y)  at ( 3,3) {$Y$};

\node (Xi) at (-5,0) {$X_i$};
\node (P)    at ( 0,0) {$\mathbb{P}({\mathcal W})$};
\node (Ysig)   at ( 5,0) {$Y_{\sigma(i)}$};

\draw[->] (X) -- node[above] {$\Phi$} (Y);

\draw[->] (X) -- node[left,pos=0.6] {$\pi_i$} (Xi);

\draw[->] (Xi) -- node[above,pos=0.5] {$\Psi_{L_i}$} (P);

\draw[->] (X) -- 
node[sloped,above,pos=0.6] 
{$\Psi_{\pi^*_iL_i}$} 
(P);

\draw[->] (Y) -- node[right,pos=0.6] {$\varpi_i$} (Ysig);

\draw[->] (Ysig) -- node[above,pos=0.5] {$\Psi_{L'_{\sigma(i)}}$} (P);

\draw[->] (Y) -- 
node[sloped,above,pos=0.6] 
{$\Psi_{\varpi^*_{\sigma(i)}L_{\sigma(i)}}$} 
(P);

\end{tikzpicture}
\]
where ${\mathcal W}\,=\, H^0(X,\, \pi_i^* L_i)\,=\, H^0(Y,\, \varpi^*_{\sigma(i)}L'_{\sigma(i)})$ (see
\eqref{q2}).

It is clear that $\text{Im}(\Psi_{L_i})\,=\,\text{Im}(\Psi_{L'_{\sigma(i)}})$. Moreover, both
$$\Psi_{L'_{\sigma(i)}}\,:\, \text{Im}(\Psi_{L'_{\sigma(i)}})\,\longrightarrow \,Y_{\sigma(i)}\,\ \
\text{ and }\,\ \ \Psi_{L_i}\,:\, X_i \,\longrightarrow\, \text{Im}(\Psi_{L_i})$$ are isomorphisms.
Therefore, the map $$ \Psi_{L'_{\sigma(i)}}^{-1} \circ \Psi_{L_i}\ :\ X_i\ \longrightarrow\ Y_{\sigma(i)}$$
is an isomorphism. This completes the proof.
\end{proof}

\section{Torelli Theorem for Product of moduli of vector bundles}

Let $M_X(r,d)$ be the moduli space of semi-stable vector bundles of rank $r$ and degree $d$ over a smooth 
projective curve $X$ over $\C$.

\begin{lemma}\label{lem: Albanese}
    The determinant morphism
    \begin{equation*}
        \det\ :\ M_X(r,d)\ \longrightarrow\ \Pic^d(X)
    \end{equation*}
    is an Albanese morphism.
\end{lemma}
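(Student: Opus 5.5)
We will verify the universal property of the Albanese morphism directly. Fix a base point $E_0\in M_X(r,d)$. We must show that every morphism $f\,:\,M_X(r,d)\,\longrightarrow\, A$ to an abelian variety $A$ with $f(E_0)=0$ factors uniquely as $f\,=\,h\circ\det$, where $h\,:\,\Pic^d(X)\,\longrightarrow\, A$ is a morphism sending $\det(E_0)$ to $0$. Uniqueness is immediate once we know $\det$ is surjective, so the real content is existence. We also need $\Pic^d(X)$ to be an abelian variety (a torsor under $J(X)$, made into one via the base point $\det E_0$), which is standard.

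\textbf{Step 1: structure of $\det$.} Consider the tensor product map
$$\tau\ :\ M_X(r,L_0)\times J(X)\ \longrightarrow\ M_X(r,d),\qquad (E,\,\xi)\ \longmapsto\ E\otimes \xi,$$
where $L_0\,=\,\det E_0$. It is surjective: given $E$, choose $\xi$ with $\xi^{\otimes r}\,\cong\, \det(E)\otimes L_0^{-1}$; then $E\otimes\xi^{-1}$ has determinant $L_0$. Moreover $\det\circ\tau(E,\xi)\,=\,L_0\otimes\xi^{\otimes r}$, and this factors through $J(X)$ via multiplication by $r$. Since $\tau$ is surjective and $M_X(r,d)$ is normal, $\tau$ is a quotient (by the finite group $J(X)[r]$ acting diagonally). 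It therefore suffices to show that $f\circ\tau$ factors through the projection to $J(X)$, and then to descend the result.

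\textbf{Step 2: morphisms from $M_X(r,L_0)$ to $A$ are constant.} This is the key input, and I expect it to be the main obstacle. $M_X(r,L_0)$ is a normal projective unirational variety; in fact it is unirational, being dominated by a family of extensions parametrized by a projective space, or by a rational parameter space of the kind used by Narasimhan--Ramanan. A rational variety admits no nonconstant map to an abelian variety, since abelian varieties contain no rational curves and any two points of a unirational variety are joined by a chain of rational curves. Alternatively, one can use that $M_X(r,L_0)$ has rational singularities and $H^1(\mathcal O)\,=\,0$ (it is Fano), so its Albanese variety is trivial. By rigidity, for any morphism $F\,:\,M_X(r,L_0)\times J(X)\,\longrightarrow\, A$, the restriction to each slice $M_X(r,L_0)\times\{\xi\}$ is constant. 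Hence $F$ factors through $J(X)$.

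\textbf{Step 3: descent.} Write $f\circ\tau\,=\,g\circ \mathrm{pr}_2$ with $g\,:\,J(X)\,\longrightarrow\, A$. Invariance of $f\circ\tau$ under $\xi\mapsto \xi\otimes\eta$ for $\eta\in J(X)[r]$, which is compensated by $E\mapsto E\otimes\eta^{-1}$, shows that $g$ is invariant under translation by $J(X)[r]$. Hence $g$ factors through the multiplication map $[r]\,:\,J(X)\,\longrightarrow\, J(X)$, which is the quotient by $J(X)[r]$. This gives $h$ on $\Pic^d(X)\,\cong\, J(X)$ (via $L_0\otimes-$) with $h\circ\det\circ\tau\,=\,f\circ\tau$. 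Surjectivity of $\tau$ then gives $h\circ\det\,=\,f$. Finally, $h$ is a homomorphism up to translation, because any morphism between abelian varieties is.
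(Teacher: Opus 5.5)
Your proof is correct, but it reaches the factorization by a different route from the paper. Both arguments rest on the same key input: the fixed-determinant space $M_X(r,L)$ is unirational, so it admits only constant maps to an abelian variety.

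The paper then descends directly along $\det$. It shows $\ell$ is constant on the fibers $\det^{-1}(L)=M_X(r,L)$, asserts that $\det$ is smooth (hence flat), and quotes an external factorization theorem for morphisms constant on the fibers of a flat map.

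You instead pass to the finite surjection $\tau\colon M_X(r,L_0)\times J(X)\to M_X(r,d)$, on which $\det\circ\tau$ is the second projection followed by $\xi\mapsto L_0\otimes\xi^{\otimes r}$. This needs only three things: a map from the product that is constant on the slices $M_X(r,L_0)\times\{\xi\}$ factors through $J(X)$; a $J(X)[r]$-invariant map descends along the isogeny $[r]$; and $\tau$ is surjective. Your route is longer but self-contained, and it needs no regularity of $\det$ at all.

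That is a genuine advantage. The paper's assertion that $\det$ is smooth with smooth fibers is false when $\gcd(r,d)\neq 1$, since $M_X(r,L)$ is then singular along the strictly semistable locus. The flatness the paper actually uses is most naturally justified by exactly the isotriviality your map $\tau$ exhibits.

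Minor points:
\begin{itemize}
\item The claim that $\tau$ is the quotient by $J(X)[r]$ is never used; only surjectivity of $\tau$ is.
\item In Step 2, ``rational variety'' should read ``unirational variety''.
\item Also in Step 2, it suffices that $f$ is constant on the images of lines through a general point, which form a dense set. You do not need chain-connectedness of arbitrary pairs of points.
\end{itemize}
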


\begin{proof}
For any $L \in Pic^d(X)$, $\det^{-1}(L)\, =\, M_X(r,\,L)$, which is a unirational variety. The Albanese  of an unirational variety is trivial, since there is no nonconstant map from a projective space to an abelian variety. 
 %Hence $\Alb(M_X(r,\,L))\,=\,0$, Hence by the universal property of Albanese morphism  $\ell$ is constant on the fibers of $\det$.
      
%By \cite[Section 2.2]{Hein} we have the Albanese
%variety $\Alb(M_X(r,\,L))\,\,=\,\, \Pic^0(\Pic^0(M_X(r,\,L)))\,=\,0$, because $\Pic^0(M_X(r,\,L))\,=\,0$ ~\cite{Drezet}.
Now, let $A$ be an abelian variety, and let $\ell\,:\, M_X(r,d)\, \longrightarrow\, A$ be a morphism. We denote the 
restriction map by, $\ell_{|M_X(r,L)}\,:\, M_X(r,L)\,\longrightarrow\, A$. By the universal property of Albanese,
there is a unique morphism $\ell'\,:\, \Alb(M_X(r,\,L))\,\longrightarrow\, A$ such that $\ell'\circ \Alb
\, =\, \ell_{|M_X(r,L)}$, where $\Alb\,:\, M_X(r,\,L)\,\longrightarrow\, \Alb(M_X(r,\,L))$ is the Albanese morphism.
Since $\Alb(M_X(r,\, L))\,=\,0$, this implies $\ell$ is constant on the fibers of $\det$.

Note that $\det$ is a surjective submersion and has smooth fibers with 
dimension equals to $\dim(M_X(r,\,d))-\dim \Pic^d(X)\,=\, r^2(g-1)+1-g\,=\,(g-1)(r^2-1)$,
and hence $\det$ is smooth morphism; in particular, the morphism $\det$ is 
flat. So, following \cite[Theorem 3.12]{Javanpeykar}, there exists a unique morphism $\ell''\,:\, 
\Pic^d(X)\,\longrightarrow \,A$ such that the following diagram commutes:
\begin{equation}\label{di}
\begin{tikzcd}[column sep=large, row sep=large]
M_X(r,\,d) \ar[d, "\det"', rightarrow] \ar[r, "\ell"] & A \\
\Pic^d(X) \ar[ru, description, "\ell'' "']
\end{tikzcd}
\end{equation} 
Since $\Pic^d(X)\,\cong\, \Pic^0(X)$, it follows that $\Pic^d(X)$ is naturally an abelian variety. Therefore,
from \eqref{di} it is clear that $\Pic^d(X)$, along with the $\det$ map, is the Albanese variety of $M_X(r,\;d)$. 
\end{proof}

Let us recall the well-known Torelli theorem for the moduli spaces \cite{Hwang}, \cite{Kouvidakis}, 
\cite{Mumford}, \cite{Narasimhan1}, \cite{Narasimhan2}, \cite[Corollary 2.12]{Alfaya1}.

\begin{theorem}\label{thm: torelli for moduli}
Let $X_1$ and $X_2$ be irreducible smooth projective curves of genus $g\,\ge\, 4$; fix integers $r_1,\,
r_2\,\ge\, 2$. Let $L_1$ and $L_2$ be line bundles over $X_1$ and $X_2$ respectively. For $j\,=\, 1,\, 2$,
let $M_{X_j}(r_j,L_j)$ denote the moduli space of semistable vector bundles on $X_j$ of rank $r_j$ and
determinant $L_j$.
Then the following  two statements are equivalent:
\begin{enumerate}
\item There is an isomorphism between the moduli spaces
$$M_{X_1}(r_1,L_1)\ \, \cong\ \, M_{X_2}(r_2,L_2).$$
 
\item $X_1\,\cong\, X_2$, $r_1\,=\,r_2$ and $\deg(L_1) \,\equiv\, \pm \deg(L_2)$\ $(\mod \;\; r)$.
\end{enumerate}
\end{theorem}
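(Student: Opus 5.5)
The statement is the classical Torelli theorem for the fixed-determinant moduli spaces, which the paper quotes from the literature. My plan is to assemble the argument from its known ingredients rather than reprove them from scratch. The two directions are handled separately, starting with the easy one.

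For (2) $\Rightarrow$ (1), I fix an isomorphism $f: X_1\to X_2$ and note that pullback $E\mapsto f^*E$ identifies $M_{X_2}(r,L_2)$ with $M_{X_1}(r,f^*L_2)$. So I may assume $X_1=X_2=X$ and $r_1=r_2=r$.
\begin{itemize}
\item If $\deg L_1\equiv \deg L_2 \pmod r$, write $\deg L_2=\deg L_1+rk$. Since $\Pic^0(X)$ is divisible, I can choose a line bundle $N$ with $N^{\otimes r}\cong L_2\otimes L_1^{-1}$. Then $E\mapsto E\otimes N$ gives the isomorphism; it preserves semistability and has inverse $E\mapsto E\otimes N^{-1}$.
\item If $\deg L_1\equiv -\deg L_2 \pmod r$, I first apply $E\mapsto E^*$, which sends $M_X(r,L_1)$ to $M_X(r,L_1^{-1})$, and then reduce to the previous case.
\end{itemize}

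For (1) $\Rightarrow$ (2), I follow the Hwang--Ramanan and Biswas--G\'omez--Mu\~noz route, which is where $g\ge 4$ enters.
\begin{enumerate}
\item From $\dim M_{X_j}(r_j,L_j)=(r_j^2-1)(g_j-1)$ alone one cannot separate $r$ from $g$. So I will instead recover $X$ from intrinsic geometry.
\item Let $M^s$ be the stable locus. For $g\ge 4$, the singular locus of $M_X(r,L)$ is exactly the strictly semistable locus. An isomorphism of moduli spaces therefore restricts to $M^s_{X_1}\cong M^s_{X_2}$.
\item On $M^s$ I use the Hitchin discriminant, or equivalently the variety of minimal rational curves, namely the Hecke curves. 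Hwang--Ramanan show that for $g\ge 4$ the Hecke curves are exactly the minimal rational curves through a general point. Their tangent directions at a general $E$ form the image of $\mathbb P(E)\to \mathbb P(T_E M)$, a $\mathbb P^{r-1}$-bundle over $X$.
\item This VMRT is determined by the variety up to isomorphism. From it I read off $X$ as the base of the fibration, and $r$ as one plus the fibre dimension.
\item Finally, $\Pic(M_X(r,L))=\Z$ is generated by $\Theta$. The degree class $\deg L \bmod r$, up to sign, is recovered from the Brauer class of the projective Poincar\'e bundle, or equivalently from the Hecke-curve family over $X\times M^s$. Its order and class in $\Z/r$ are intrinsic up to the automorphism of the dual, which gives the $\pm$.
\end{enumerate}

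The main obstacle is step 3, the characterization of Hecke curves as the minimal rational curves, and its companion in step 2 for the non-coprime case. These are exactly the deep inputs of \cite{Hwang}, \cite{Biswas} and \cite{Kouvidakis}. The paper cites them, so I would invoke them rather than reprove them.
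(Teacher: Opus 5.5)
The paper gives no proof of this statement; it quotes it from Kouvidakis--Pantev, Hwang--Ramanan, Narasimhan--Ramanan and Alfaya--G\'omez. So deferring to those sources is in line with the paper, and your argument for (2)$\Rightarrow$(1) is complete. The outline you give for (1)$\Rightarrow$(2), however, misstates what those inputs provide, and two steps fail as written.

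\textbf{Step 3: the VMRT is misdescribed for $r\ge 3$.} The variety of minimal rational tangents at a general $E$ is not the image of $\mathbb{P}(E)$. A Hecke curve through $E$ is specified by three pieces of data:
\begin{enumerate}
\item a point $x\in X$;
\item a quotient $\varphi\colon E_x\to\C$, which gives the elementary transformation $W\subset E$ with $E/W\cong\C_x$;
\item a point $[e]\in\mathbb{P}(\ker\varphi)$, which gives the pencil in $\mathbb{P}(W_x)$ through the point $[E]$.
\end{enumerate}
Its tangent at $E$ is the image of the rank-one nilpotent $e\otimes\varphi$ under the coboundary $\mathrm{End}_0(E_x)\to H^1(X,\mathrm{End}_0E)$. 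So $\mathcal{C}_E$ is, birationally, the bundle over $X$ whose fibre is the incidence variety $\{\varphi(e)=0\}\subset\mathbb{P}(E_x)\times\mathbb{P}(E_x^*)$. This fibre has dimension $2r-3$, so the total dimension is $2r-2$, which matches $-K_M\cdot(\text{Hecke curve})-2=2r-2$. This agrees with $\mathbb{P}(E)$ only when $r=2$. Consequently your recipe ``$r$ is one plus the fibre dimension'' is wrong: in fact $r=(\text{fibre dimension}+3)/2$, or you can read $r$ off $\dim M=(r^2-1)(g-1)$ once $g$ is known.

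\textbf{Step 5: the Brauer class cannot recover $d$ modulo $r$.} This is the real gap. The Brauer class of the projective Poincar\'e bundle on $M^s$ only sees $\gcd(r,d)$. It is killed by $\gcd(r,d)$, and in particular it vanishes whenever $\gcd(r,d)=1$, because then an honest Poincar\'e bundle exists. Take $r=5$ with $\deg L_1=1$ and $\deg L_2=2$. Your invariant is trivial for both, yet the theorem asserts $M_X(5,L_1)\not\cong M_X(5,L_2)$, since $1\not\equiv\pm 2$ modulo $5$. There is no intrinsic ``class in $\Z/r$'' attached to that Brauer class.

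\textbf{What does carry $d$ modulo $r$.} It is the topological type of the $PGL_r$-bundle $\mathbb{P}(E)\to X$, i.e.\ its class in $H^2(X,\mu_r)\cong\Z/r$. This is not a Brauer class at all, since $\mathrm{Br}(X)=0$. The corrected step 3 gives you access to it:
\begin{enumerate}
\item An isomorphism $\Phi$ carries $\mathcal{C}_E$ to $\mathcal{C}_{\Phi(E)}$ for general $E$.
\item The tangent map of minimal rational curves at a general point is birational, so you may pass to the normalization, namely the incidence bundle.
\item Its fibration over $X$ is the MRC quotient, hence intrinsic. This gives an isomorphism $f\colon X_1\cong X_2$.
\item For $r\ge 3$ the incidence bundle has exactly two relative extremal contractions over $X$, onto $\mathbb{P}(E)$ and onto $\mathbb{P}(E^*)$.
\item Hence, with $E'=\Phi(E)$, either $\mathbb{P}(E)\cong f^*\mathbb{P}(E')$ or $\mathbb{P}(E)\cong f^*\mathbb{P}(E'^*)$. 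So $E\cong f^*E'\otimes N$ or $E\cong f^*E'^*\otimes N$ for some line bundle $N$, giving $\deg L_1\equiv\pm\deg L_2$ modulo $r$.
\end{enumerate}
The unordered pair $\{\mathbb{P}(E),\mathbb{P}(E^*)\}$ is the actual source of the sign ambiguity. For $r=2$, $\mathcal{C}_E$ is $\mathbb{P}(E)$ itself and parity is all that is needed.
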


We prove the following theorem.

\begin{theorem}\label{thm:torelli for prod of moduli space}
For $1\, \leq\, i\, \leq\, m$ (respectively, $1\, \leq\, j\, \leq\, n$) let
$X_i$ (respectively, $X_j'$) be an irreducible smooth projective curves with genus at least $4$.
Fix an integer  $r_i\, \geq\, 2$ (respectively, $r_j' \,\ge\, 2$). Let $M_{X_i}(r_i,d_i)$
(respectively $M_{X_j'}(r'_j, d_j')$) be the moduli space of semistable vector bundles over $X_i$
(respectively, $X_j'$) of rank $r_i$ (respectively, $r_j'$) and degree $d_i$ (respectively, $d_j'$).
Then,
\begin{equation*}
\prod_{i=1}^m M_{X_i}(r_i,d_i)\ \cong\ \prod_{j=1}^n M_{X_j'}(r_j',d_j') 
\end{equation*}
if and only if the followings four statements hold:
\begin{enumerate}
\item $m\,=\, n$;

\item There is a permutation $\sigma$ of $\{1,\, \cdots,\, m\}$ such that
$$X_i\ \cong\ X'_{\sigma(i)}$$
for all $i\,\in\, \{1,\, \cdots,\, m\}$;

\item $r_i\ =\ r_{\sigma(i)}'$;

\item $d_i\ \equiv\ \pm d'_{\sigma(i)}$\ \ $(\mod\ \; r_i)$
\end{enumerate}
\end{theorem}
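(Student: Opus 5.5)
We argue the two directions separately, and the ``only if'' direction carries the content.

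\emph{The ``if'' direction.} Given $\sigma$ with $X_i\cong X'_{\sigma(i)}$ and $r_i=r'_{\sigma(i)}$, we need $M_{X_i}(r_i,d_i)\cong M_{X'_{\sigma(i)}}(r_i,d'_{\sigma(i)})$ for each $i$; the product isomorphism then follows by reordering factors. If $d'\equiv d \pmod r$, write $d'=d+kr$ and tensor with a line bundle of degree $k$. If $d'\equiv -d\pmod r$, first apply $E\mapsto E^*$, which sends degree $d$ to $-d$, and then tensor as before. Both maps are isomorphisms of moduli spaces and preserve semistability.

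\emph{The ``only if'' direction.} Let $\Phi:\prod_i M_{X_i}(r_i,d_i)\to\prod_j M_{X'_j}(r'_j,d'_j)$ be an isomorphism.

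First, the Albanese of a product is the product of the Albaneses. By Lemma \ref{lem: Albanese}, the Albanese morphism of the left side is $\det=\prod_i\det_i$ to $A=\prod_i\Pic^{d_i}(X_i)$, and similarly $\det'$ to $A'$ on the right. The universal property gives an isomorphism $\phi:A\to A'$ with $\det'\circ\Phi=\phi\circ\det$.

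Second, restrict to fibers. For $\mathbf L=(L_1,\dots,L_m)\in A$, the map $\Phi$ carries $\det^{-1}(\mathbf L)=\prod_i M_{X_i}(r_i,L_i)$ isomorphically onto $\det'^{-1}(\phi(\mathbf L))=\prod_j M_{X'_j}(r'_j,L'_j)$. Each factor $M_X(r,L)$ is a normal projective variety with $\rho=1$ and $\Pic^0=0$ (Drezet--Narasimhan). Theorem \ref{thm:classification_fano} therefore gives $m=n$ and a permutation $\sigma$ with $M_{X_i}(r_i,L_i)\cong M_{X'_{\sigma(i)}}(r'_{\sigma(i)},L'_{\sigma(i)})$ for every $i$.

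Third, apply Theorem \ref{thm: torelli for moduli} to each such pair. This yields $X_i\cong X'_{\sigma(i)}$, $r_i=r'_{\sigma(i)}$, and $\deg L_i\equiv\pm\deg L'_{\sigma(i)}\pmod{r_i}$. Since $\deg L_i=d_i$ and $\deg L'_{\sigma(i)}=d'_{\sigma(i)}$, these are exactly conditions (1)--(4).

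The main obstacle is the second step: justifying that $\Phi$ really restricts to an isomorphism between fibers. Because $\Phi$ is an isomorphism, $\Phi^{-1}$ induces $\phi^{-1}$, so $\Phi$ maps $\det^{-1}(\mathbf L)$ into $\det'^{-1}(\phi(\mathbf L))$ and $\Phi^{-1}$ maps back. Hence $\Phi$ gives a bijective morphism between the fibers whose inverse is also a morphism, i.e.\ an isomorphism. The other point needing care is the hypotheses of Theorem \ref{thm:classification_fano}: normality, Picard rank one, and vanishing of $\Pic^0$ for $M_X(r,L)$. These are standard facts and will be quoted from the literature.
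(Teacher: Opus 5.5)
Your proposal is correct and follows the paper's proof essentially step for step. Both arguments identify the Albanese via the determinant, restrict $\Phi$ to the fibers $\prod_i M_{X_i}(r_i,L_i)$, and apply Theorem \ref{thm:classification_fano} using the Drezet--Narasimhan input, followed by the Torelli theorem for fixed-determinant moduli. Your explicit isomorphisms for the ``if'' direction (tensoring and dualizing) and your check that $\Phi$ restricts to an isomorphism of fibers only fill in details the paper leaves implicit.
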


\begin{proof}
If the above four statements hold, then it is evident that
$\prod_{i=1}^m M_{X_i}(r_i,d_i)\, \cong\, \prod_{j=1}^n M_{X_j'}(r_j',d_j')$.
We will prove the converse.

Fix an isomorphism
$$
\Phi\ :\ \prod_{i=1}^m M_{X_i}(r_i,d_i)\ \xrightarrow{\,\,\,\simeq\,\,\,}\
\prod_{j=1}^m M_{X_j'}(r_j',d_j').
$$
We will prove that the four statements in the theorem hold.

Note that following \cite[Proposition 2.4]{Nobel} and Lemma \ref{lem: Albanese},
\begin{equation*}
\Alb\Big(\prod_{i=1}^m M_{X_i}(r_i,d_i) \Big)\ \cong\ \prod_{i=1}^m \Alb (M_{X_i}(r_i,d_i))\ \cong
\ \prod_{i=1}^m \Pic^{d_i}(X_i)
\end{equation*}
with the Albanese morphism being the product of coordinate-wise Albanese morphisms. Similarly,
$$\Alb\Big(\prod_{j=1}^n M_{X_j'}(r_j',d_j') \Big)\ \cong\ \prod_{j=1}^n \Pic^{d_j'}(X_j').$$ Therefore, from
the universal property of the Albanese there is the following commutative diagram
\begin{equation}\label{diag: Albanese}
\begin{array}{ccc}
\prod_{i=1}^m M_{X_i}(r_i,d_i) & \xrightarrow[\Phi]{\,\,\,\simeq\,\,\,} & \prod_{j=1}^n M_{X_j'}(r_j',d_j') \\
\Big\downarrow \det & & \Big\downarrow \det \\
\prod_{i=1}^m \Pic^{d_i}(X_i) & \xrightarrow[\phi]{\,\,\,\simeq\,\,\,} & \prod_{j=1}^n \Pic^{d_j'}(X_j')
\end{array}
\end{equation}
where the isomorphism $\phi$ is induced by the isomorphism $\Phi$.

Take $\{L_1,\,\cdots,\,L_m\}\,\in\, \prod_{i=1}^m \Pic^{d_i}(X_i)$, and let
$$\phi(\{L_1,\,\cdots ,\, L_m\})\ :=\ \{L'_1,\,\cdots,\, L'_n\}\ \in\ \prod_{j=1}^n \Pic^{d_j'}(X_j'),$$
where $\phi$ is the morphism in \eqref{diag: Albanese}.
Restricting  $\Phi$ (see \eqref{diag: Albanese}) to the fibers of $\det$ we have the following:
\begin{equation}\label{eq: Isom of prod of fixed det moduli}
\Phi\ :\ \prod_{i=1}^m M_{X_i}(r_i,L_i)\ \xrightarrow{\,\,\,\simeq\,\,\,}\ \prod_{j=1}^n M_{X_j'}(r_j',L'_j).
\end{equation}

In view of the isomorphism in \eqref{eq: Isom of prod of fixed det moduli}, from Theorem
\ref{thm:classification_fano} and the fact that ${\rm Pic}(M_{X_i}(r_i,L_i))\,=\,
{\mathbb Z}\,=\, M_{X_j'}(r_j',L'_j)$ and  $\Pic^0(M_{X_i}(r_i,L_i))\,=\, 0\,=\, \Pic^0(M_{X_j'}(r_j',L'_j))$ for all
$1\, \leq\, i\, \leq\, m$ and $1\, \leq\, j\, \leq\, n$ (see \cite[p.~55, Theorem B]{Drezet})
the following is deduced:
\begin{enumerate}
\item $n\,=\,m$, and

\item there is a permutation $\sigma$ of $\{1,\, \cdots,\, m\}$ such that
$$M_{X_i}(r_i,L_i)\ \cong\ M_{X_{\sigma(i)}}(r_j',L'_{\sigma(i)})$$
for all $1\, \leq\, i\, \leq\, m$.
\end{enumerate}
Now from Theorem \ref{thm: torelli for moduli} it follows that $X_i\,\cong\, X_{\sigma(i)}'$,
$r_i\,=\,r_{\sigma(i)}'$  and $d_i\,\equiv\,\pm d_{\sigma(i)}' \quad (\mod \;\; r_i)$.
This completes the proof.
\end{proof}

Theorem \ref{thm:torelli for prod of moduli space} can be extended to the case of moduli stacks.
For a line bundle $L$ over a smooth projective curve $X$,
let $\mathcal{M}_X(r, L)$ be the moduli stack of semistable vector bundles over $X$ of rank $r$
and fixed determinant $L$.

\begin{theorem}\label{thm:torelli for prod of stacks}
For $1\, \leq\, i\, \leq\, m$ (respectively, $1\, \leq\, j\, \leq\, n$) let
$X_i$ (respectively, $X_j'$) be an irreducible smooth complex projective curves with genus at least $4$.
Fix an integer  $r_i\, \geq\, 2$ (respectively, $r_j' \,\ge\, 2$). Let ${\mathcal M}_{X_i}(r_i,L_i)$
(respectively ${\mathcal M}_{X_j'}(r'_j, L_j')$) be the moduli stack of semistable vector bundles over $X_i$
(respectively, $X_j'$) of rank $r_i$ (respectively, $r_j'$) and fixed determinant $L_i$ (respectively, $L_j'$).
Then,
\begin{equation}\label{j1}
\prod_{i=1}^m {\mathcal M}_{X_i}(r_i,L_i)\ \cong\ \prod_{j=1}^n {\mathcal M}_{X_j'}(r_j',L_j') 
\end{equation}
if and only if the followings four statements hold:
\begin{enumerate}
\item $m\,=\, n$;

\item There is a permutation $\sigma$ of $\{1,\, \cdots,\, m\}$ such that
$$X_i\ \cong\ X'_{\sigma(i)}$$
for all $i\,\in\, \{1,\, \cdots,\, m\}$;

\item $r_i\ =\ r_{\sigma(i)}'$;

\item ${\rm degree}(L_i)\ \equiv\ \pm {\rm degree}(L'_{\sigma(i)})$\ \ $(\mod\ \; r_i)$
\end{enumerate}
\end{theorem}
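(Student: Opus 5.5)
The plan is to reduce Theorem \ref{thm:torelli for prod of stacks} to the statement for coarse moduli spaces, Theorem \ref{thm:torelli for prod of moduli space}. More precisely, we reduce it to its fixed-determinant core, which is Theorem \ref{thm:classification_fano} combined with Theorem \ref{thm: torelli for moduli}.

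The ``if'' direction is immediate. If $X_i\cong X'_{\sigma(i)}$ and $r_i=r'_{\sigma(i)}$, then tensoring by a suitable line bundle and, if needed, taking duals gives an isomorphism of stacks ${\mathcal M}_{X_i}(r_i,L_i)\cong {\mathcal M}_{X'_{\sigma(i)}}(r_i,L'_{\sigma(i)})$. Indeed, $E\mapsto E\otimes N$ changes the degree of the determinant by $r\deg N$, and $E\mapsto E^*$ negates it. These operations preserve semistability, so condition (4) suffices. Taking products of these isomorphisms gives \eqref{j1}.

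For the converse, fix an isomorphism of stacks $\Phi$ as in \eqref{j1}. The first step is to pass to good moduli spaces. The stack ${\mathcal M}_X(r,L)$ of semistable bundles admits the coarse (good) moduli space $M_X(r,L)$, and this is universal for maps from the stack to algebraic spaces. Products of good moduli space morphisms are again good moduli space morphisms, since they are cohomologically affine and the invariants of a product are the tensor product of the invariants over $\C$. Hence $\prod_i M_{X_i}(r_i,L_i)$ is the good moduli space of $\prod_i{\mathcal M}_{X_i}(r_i,L_i)$. By the universal property, $\Phi$ descends to an isomorphism
$$\prod_{i=1}^m M_{X_i}(r_i,L_i)\ \cong\ \prod_{j=1}^n M_{X'_j}(r'_j,L'_j).$$

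Next, each factor $M_{X_i}(r_i,L_i)$ is a normal projective variety. By \cite[p.~55, Theorem B]{Drezet}, it has Picard group $\Z$ and trivial $\Pic^0$, and the same holds for the factors $M_{X'_j}(r'_j,L'_j)$. So Theorem \ref{thm:classification_fano} applies. It gives $m=n$ and a permutation $\sigma$ with $M_{X_i}(r_i,L_i)\cong M_{X'_{\sigma(i)}}(r'_{\sigma(i)},L'_{\sigma(i)})$ for every $i$. Theorem \ref{thm: torelli for moduli}, which requires genus at least $4$, then yields $X_i\cong X'_{\sigma(i)}$, $r_i=r'_{\sigma(i)}$ and $\deg L_i\equiv\pm\deg L'_{\sigma(i)}$ $(\mod\; r_i)$.

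The main obstacle is the first step: justifying that an isomorphism of product stacks induces an isomorphism of the products of coarse spaces. This needs the fact that good moduli spaces commute with finite products over $\C$. If that step causes trouble, the alternative I would use is to run the argument of Theorem \ref{thm:classification_fano} directly on the stack. We have $\Pic({\mathcal M}_X(r,L))\cong\Z$, generated by the determinant-of-cohomology bundle, and sections of its powers recover the homogeneous coordinate ring of $M_X(r,L)$. From this, the extremal rays of the product stack's "nef cone" and the associated projective embeddings identify the factors $M_{X_i}(r_i,L_i)$ as $\mathrm{Proj}$ of section rings in the same way as in the proof of Theorem \ref{thm:classification_fano}.
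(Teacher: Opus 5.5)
Your proposal is correct and follows essentially the same route as the paper. You descend the stack isomorphism to an isomorphism $\prod_i M_{X_i}(r_i,L_i)\cong\prod_j M_{X'_j}(r'_j,L'_j)$ of good moduli spaces, using that products of good moduli spaces are good moduli spaces together with their universal property (the paper's appeal to \cite{Alper}), and you then conclude via Theorem \ref{thm:classification_fano}, the Drezet--Narasimhan Picard group computation, and Theorem \ref{thm: torelli for moduli}.

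The differences are cosmetic. You prove the ``if'' direction by hand (pullback along the curve isomorphism, twisting by an $r$-th root of a suitable line bundle, and dualizing), where the paper cites \cite[Theorem 3.6]{Alfaya2}. You also apply the fixed-determinant argument directly instead of invoking Theorem \ref{thm:torelli for prod of moduli space}. Your step is the more precise one, since that theorem is stated for $M_X(r,d)$ rather than $M_X(r,L)$.
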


\begin{proof}
If the four statements in the theorem hold, then it can be shown that
$$\prod_{i=1}^m {\mathcal M}_{X_i}(r_i,L_i)\ \cong\ \prod_{j=1}^n {\mathcal M}_{X_j'}(r_j',L_j').$$
Indeed, this follows from \cite[Theorem 3.6]{Alfaya2} and then constructing a coordinate-wise morphism.

To prove the converse, assume that there is an isomorphism as in \eqref{j1}.

Following \cite[Lemma 4.15 and Theorem 6.6]{Alper} we see that
$\prod_{i=1}^m \mathcal{M}_{X_i}(r_i,L_i)$ (respectively, $\prod_{j=1}^n \mathcal{M}_{X_j'}(r_j',L_j')$)
is corepresented by $\prod_{i=1}^m M_{X_i}(r_i,L_i)$ (respectively, $\prod_{j=1}^m M_{X_j'}(r_j',L_j')$.
Therefore, we have a commutative diagram
   \begin{equation}\label{diag: corep of moduli stacks}
\begin{array}{ccc}
  \prod_{i=1}^m \mathcal{M}_{X_i}(r_i,L_i) & \xrightarrow{\,\,\,\simeq\,\,\,} & \prod_{j=1}^n \mathcal{M}_{X_j'}(r_j',L_j') \\
\Big\downarrow & & \Big\downarrow \\
\prod_{i=1}^m M_{X_i}(r_i,L_i) & \xrightarrow{\,\,\,\simeq\,\,\,} & \prod_{j=1}^n M_{X_j'}(r_j',L_j')
\end{array}
\end{equation}
The isomorphism at the bottom is induced by the isomorphism at the top, by using the universal property
of good moduli spaces. In view of the isomorphism
$$
\prod_{i=1}^m M_{X_i}(r_i,L_i) \ \xrightarrow{\,\,\,\simeq\,\,\,} \ \prod_{j=1}^n M_{X_j'}(r_j',L_j')
$$
in \eqref{diag: corep of moduli stacks} it follows from Theorem \ref{thm:torelli for prod of moduli space}
that the four statements in the theorem hold. This completes the proof.
\end{proof}

The automorphism group of $M_X(r,L)$ is denoted by $\Aut(M_X(r,L))$; see \cite{Kouvidakis}, \cite{Hwang}, 
\cite{Biswas}, \cite{Alfaya2} for results on $\Aut(M_X(r,L))$.

\begin{corollary}\label{cor: aut gp of the product}
Take moduli spaces $\{M_{X_i}(r_i,L_i)\}_{i=1}^m$ that are mutually non-isomorphic, which
means that $M_{X_i}(r_i,L_i)$ is isomorphic to $M_{X_j}(r_j,L_j)$ only if $i\,=\, j$. Then
$$\Aut\Big(\prod_{i=1}^m M_{X_i}(r_i,L_i)^{n_i}\Big)\ \cong\ {\prod_{i=1}^m
\Aut\Big(M_{X_i}(r_i,L_i)\Big)}^{n_i} \rtimes S_{n_i},$$
where $S_{n_i}$ is the group of permutations of $\{1,\, \cdots, \, n_i\}$; the semi-direct product
is given by the action of $S_{n_i}$ on $M_{X_i}(r_i,L_i)^{n_i}$ which simply permutes the factors.

Similarly, $$\Aut\Big(\prod_{i=1}^m {\mathcal M}_{X_i}(r_i,L_i)^{n_i}\Big)\ \cong\ {\prod_{i=1}^m
\Aut\Big({\mathcal M}_{X_i}(r_i,L_i)\Big)}^{n_i} \rtimes S_{n_i}.$$
\end{corollary}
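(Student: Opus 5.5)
The plan is to show that every automorphism of the product permutes the factors up to individual automorphisms. Write $Z\,=\,\prod_{i=1}^m M_i^{n_i}$ with $M_i\,:=\,M_{X_i}(r_i,L_i)$, and index the factors by pairs $(i,k)$ with $1\,\le\, k\,\le\, n_i$. There is an evident injective homomorphism
$$\prod_{i=1}^m \big(\Aut(M_i)^{n_i}\rtimes S_{n_i}\big)\ \longrightarrow\ \Aut(Z),$$
which acts factorwise and permutes the copies of $M_i$ among themselves. It remains to prove that this map is surjective.

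\textbf{Step 1: the automorphism permutes the factors.} Take $\Phi\,\in\,\Aut(Z)$. Each $M_i$ is normal projective with $\rho(M_i)\,=\,1$ and $\Pic^0(M_i)\,=\,0$ \cite{Drezet}. Therefore the proof of Theorem \ref{thm:classification_fano} applies: $\Phi^*$ permutes the extremal rays of $\Nef(Z)$ (Lemma \ref{lem:cone}), which gives a permutation $\tau$ of the index set. By the same theorem, the factor indexed by $(i,k)$ is isomorphic to the factor indexed by $\tau(i,k)$. Since the $M_i$ are mutually non-isomorphic, $\tau$ preserves each block, so $\tau\,\in\,\prod_i S_{n_i}$. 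Composing $\Phi$ with the inverse permutation of factors, I may assume $\tau\,=\,{\rm id}$. Then $\Phi^*\pi_{(i,k)}^*L$ is isomorphic to $\pi_{(i,k)}^*L'$ for suitable very ample $L,\,L'$ on $M_i$, as in \eqref{eq: iso of line bundles}.

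\textbf{Step 2: the automorphism is a product.} This is the main obstacle: I must show that $\Phi$ itself decomposes as a product, not merely that the factors are abstractly isomorphic. The approach is through the diagram in the proof of Theorem \ref{thm:classification_fano}. The morphism $\pi_{(i,k)}\circ\Phi$ equals $\Psi_{L'}^{-1}\circ\Psi\circ\Psi_{\pi^*L}$ for the induced linear isomorphism $\Psi$ of projective spaces coming from \eqref{q2}. Hence $\pi_{(i,k)}\circ\Phi$ factors through $\pi_{(i,k)}$, giving a morphism $\phi_{(i,k)}\,:\,M_i\to M_i$ with $\pi_{(i,k)}\circ\Phi\,=\,\phi_{(i,k)}\circ\pi_{(i,k)}$. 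So $\Phi\,=\,\prod\phi_{(i,k)}$. Applying the same argument to $\Phi^{-1}$ (whose permutation is also trivial) shows that each $\phi_{(i,k)}$ is an automorphism. This establishes surjectivity.

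\textbf{Step 3: the stack case.} Any automorphism of $\prod\mathcal{M}_{X_i}(r_i,L_i)^{n_i}$ induces an automorphism of the good moduli space $Z$ (\cite{Alper}, as in diagram \eqref{diag: corep of moduli stacks}). By Steps 1 and 2, that automorphism is a permutation composed with a product of automorphisms. The remaining task is to lift this decomposition to the stack: I would use \cite{Alfaya2}, whose results identify automorphisms of $\mathcal{M}_X(r,L)$ with those of $M_X(r,L)$ given by the explicit generators (pullback by curve automorphisms, tensoring by torsion line bundles, dualization). The same permutation argument then applies directly to the stack product.
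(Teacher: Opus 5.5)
Your Steps 1 and 2 are correct and follow the paper's route. $\Phi^*$ permutes the extremal rays of $\Nef(Z)$, mutual non-isomorphism forces the permutation into $\prod_i S_{n_i}$, and what remains are product automorphisms. Step 2 supplies what the paper only asserts, namely that the kernel of $\Aut(Z)\to\prod_i S_{n_i}$ is $\prod_i\Aut(M_i)^{n_i}$. The relation $\Psi_{L'}\circ\pi_{(i,k)}\circ\Phi=\Psi\circ\Psi_{L}\circ\pi_{(i,k)}$, implicit in the proof of Theorem \ref{thm:classification_fano}, shows that $\pi_{(i,k)}\circ\Phi$ factors through $\pi_{(i,k)}$, which is what is needed. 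Keep the roles of $L$ and $L'$ consistent between Steps 1 and 2.

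\textbf{The gap is in Step 3.} Write $\mathcal{Z}=\prod_i\mathcal{M}_i^{n_i}$ and $\mathcal{Z}^s$ for its stable locus. Lifting the permutation and the factor automorphisms (via \cite{Alfaya2}) gives a product-type stack automorphism $\tilde\Phi'$. It induces the same automorphism of $Z$ as the given $\tilde\Phi$. To conclude $\tilde\Phi\cong\tilde\Phi'$, you need $\Aut(\mathcal{Z})\to\Aut(Z)$ to be injective up to $2$-isomorphism, and you give no argument for this.

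This injectivity is not formal, and it does not follow from single-factor statements. For example, the autoequivalences of $B\mu_r\times B\mu_r$ up to $2$-isomorphism form $GL_2(\Z/r)$. This includes the non-product automorphism $(P,Q)\mapsto(P,P\otimes Q)$, even though the good moduli space is a point. Over $Z^s$, $\mathcal{Z}^s$ is a gerbe banded by the product of the centres (e.g.\ $\prod_i\mu_{r_i}^{n_i}$). A priori it can carry exactly such factor-mixing automorphisms over $\mathrm{id}_{Z^s}$.

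For the same reason, ``the same permutation argument applies directly to the stack product'' does not work. The nef cone and linear systems of Step 2 live on $Z$. A stack morphism $\mathrm{pr}_{(i,k)}\circ\tilde\Phi$ whose induced map on good moduli spaces factors through a projection need not itself factor through $\mathrm{pr}_{(i,k)}$.

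Closing the gap needs input specific to these stacks. One route is to show $H^1(\mathcal{Z}^s,\mu_r)=0$ via the Kummer sequence, using that global units are constants and that the Picard group of $\mathcal{Z}^s$ is torsion-free. This rules out nontrivial automorphisms of the gerbe over $\mathrm{id}_{Z^s}$, both band-changing ones and torsor twists. You would then extend the resulting $2$-isomorphism across the strictly semistable locus.

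The paper's own one-line deduction of the stack statement from diagram \eqref{diag: corep of moduli stacks} tacitly uses the same injectivity. So it does not supply this step either.
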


\begin{proof}
The group $\Aut\Big(\prod_{i=1}^m M_{X_i}(r_i,L_i)^{n_i}\Big)$ acts on the nef cone of
$\prod_{i=1}^m M_{X_i}(r_i,L_i)^{n_i}$, and this action preserves the extremal rays in
the nef cone of $\prod_{i=1}^m M_{X_i}(r_i,L_i)^{n_i}$. Fix an ample divisor $D_i$
on $M_{X_i}(r_i,L_i)$. As noted in the proof of
Theorem \ref{thm:classification_fano}, the extremal $\Nef(\prod_{i=1}^m M_{X_i}(r_i,L_i)^{n_i})$
is given by $\{\R_{\ge 0}\cdot D_{i,j}\}_{j=1}^{n_i}$, where $D_{i,j}$ is the divisor $D_j$
in the $j$-th factor of $\prod_{i=1}^m M_{X_i}(r_i,L_i)^{n_i}$. Consequently, we get a homomorphism
$$
\Aut(\prod_{i=1}^m M_{X_i}(r_i,L_i)^{n_i})\ \longrightarrow\ S_{n_i}.
$$
The kernel of this homomorphism is $\prod_{i=1}^m
\Aut\Big(M_{X_i}(r_i,L_i)\Big)^{n_i}$. Now the first statement (on moduli spaces) follows from this.

The second statement (on moduli stacks) follows from \eqref{diag: corep of moduli stacks} and the
first statement.
\end{proof}

\section*{Acknowledgements}

The third-named author wishes to thank the Department of Mathematics, IIT Madras, for excellent working 
conditions, and the Prime Minister's Research Fellowship (PMRF, ID: 2503482) for their financial support. The 
authors want to thank Dr. Arijit Mukherjee for many helpful discussions.

\end{document}